\newcommand{\R}{\mathbb{R}}
\newcommand{\N}{\mathbb{N}}
\newcommand{\Q}{\mathbb{Q}}
\newcommand{\SL}{{\rm SL}}
\newcommand{\GL}{{\rm GL}}
\newcommand{\Mat}{{\rm Mat}}
\newcommand{\Uscr}{\mathscr{U}}
\newcommand{\tops}{\mathscr{V}}
\newcommand{\abs}[1]{\bigl| #1 \bigr|} % absolute value
\newcommand{\norm}[1]{\lVert#1\rVert} % norm
\newcommand{\normtwo}[1]{% Peter Grill norm @tex.stackexchange.com
	{\left\vert\kern-0.25ex\left\vert\kern-0.25ex\left\vert #1
		\right\vert\kern-0.25ex\right\vert\kern-0.25ex\right\vert} }
\newcommand{\GLmR}{\GL(d, \R)}
\newcommand{\Gr}{{\rm Gr}}
\newcommand{\Pp}{\mathbb{P}}
\newcommand{\bld}[1]{\mathbf{{#1}}}
\newcommand\restr[2]{{% we make the whole thing an ordinary symbol
		\left.\kern-\nulldelimiterspace % automatically resize the bar with \right
		#1 % the function
		\vphantom{\big|} % pretend it's a little taller at normal size
		\right|_{#2} % this is the delimiter
}}
\theoremstyle{plain}
\newtheorem{theorem}{Theorem}[section]
\newtheorem{proposition}{Proposition}[section]
\newtheorem{corollary}[proposition]{Corollary}
\newtheorem{lemma}[proposition]{Lemma}
\theoremstyle{definition}
\newtheorem{definition}{Definition}[section]
\theoremstyle{definition}
\newtheorem{remark}{Remark}[section]
\numberwithin{equation}{section}
\newcommand{\Prob}{\mathrm{Prob}}
\newcommand{\rank}{\mathrm{rank}}
\newcommand{\Ker}{\mathrm{K}}
\title [non-invertible random cocycles with constant rank]{Continuity of the Lyapunov exponents of non-invertible random cocycles with constant rank}
\date{}
\begin{document}

\author[P. Duarte]{Pedro Duarte}
\address{Departamento de Matem\'atica and CMAFcIO\\
Faculdade de Ci\^encias\\
Universidade de Lisboa\\
Portugal
}
\email{pmduarte@fc.ul.pt}

\author[C. Freijo]{Catalina Freijo}
\address{Instituto de Matem\'atica e Estatistica\\
	Universidade de S\~ao Paulo\\
	Brasil
}
\email{catalinafreijo@ime.usp.br}

\begin{abstract}
In this paper we establish uniform large deviations estimates of exponential type and H\"older continuity of the Lyapunov exponents for random non-invertible cocycles with constant rank.
\end{abstract}

\maketitle

\tableofcontents

\section{Introduction}
\label{intro}

In Ergodic Theory the Lyapunov exponent of an i.i.d. matrix valued random process $A_1,  A_2, \ldots $ measures the growth
rate of norms of products of matrices in the process
\begin{equation}
\label{lexp}
 L=\lim_{n\to \infty} \frac{1}{n}\, \log \norm{A_n\, \cdots \, A_1} . 
\end{equation}
The almost sure existence of this limit was established by
H. Furstenberg and H. Kesten~\cite[1960]{FK60} for ergodic processes, i.e., processes generated by a matrix valued function over the dynamics of an ergodic measure preserving transformation. The   i.i.d. processes above are generated by  locally constant matrix valued functions on spaces of sequences and Bernoulli shifts on those spaces.

This result  marked the beginning of the Theory of Lyapunov exponents of linear cocycles, which was later greatly amplified by the 
Multiplicative ergodic theorem~\cite[1968]{Oseledet68} of V. I. Oseledets. This theorem provides a global description of all (directional) Lyapunov exponents\footnote{The Lyapunov exponent in~\eqref{lexp} is sometimes called the top or first Lyapunov exponent because it is the maximum of all  directional Lyapunov exponents. }  of a linear cocycle.
An important goal of this theory is  understanding the stability, or continuity, of the Lyapunov exponents  in terms of the data defining a linear cocycle.

Next we outline some of the main achievements of this theory in the last 60 years  regarding the previous goal,
for the special class of (invertible) random linear cocycles, i.e., generated by a locally constant matrix valued function $A:X\to \GL(d,\R)$  over a Bernoulli or Markov shift $\sigma:X\to X$. As usual, $\GL(d,\R)$ denotes the general linear group of $d\times d$ invertible matrices with real entries.

 A first milestone of this theory was Furstenberg's formula in~\cite[1963]{Fur63} which expresses the top Lyapunov exponent
 via an integral formula involving a so-called \textit{stationary measure} of the random linear cocycle.\footnote{The definition and existence of stationary measures for non-invertible linear cocycles, i.e., determined by  locally constant functions $A:X\to\Mat(d,\R)$, is problematic. This explains why a theory of (non-invertible) random linear cocycles has been undeveloped. }
In~\cite[1983]{FK83}  H. Furstenberg and Y. Kifer used this formula to
prove the continuity of the (top)
 Lyapunov exponent under a generic irreducibility  assumption on the random linear cocycle.
 
A few years later  E. Le Page~\cite[1989]{LePage} 
proved that the top Lyapunov exponent is actually  H\"older continuous if together with the irreducibility assumption  we assume  the simplicity of the first Lyapunov exponent. The proof is based on a spectral method, which exploits the existence of a   gap in the spectrum of a certain Markov operator\footnote{The mentioned spectral gap  refers the spectrum of the Markov operator being contained in some closed disk of radius $<1$, with the exception of the simple eigenvalue $1$ associated with the constant functions (the operator's fixed points).} that traces the action of the linear cocycle on the projective space. The existence of a spectral gap is forced by the generic hypothesis of Le Page's continuity theorem.
This spectral method had been used earlier by Le Page~\cite[1982]{LP82} to establish limit theorems such as a large deviation principle and a central limit theorem for random linear cocycles over Bernoulli shifts. Similar limit theorems for random cocycles over Markov shifts were obtained by P. Bougerol~\cite[1988]{Bou}.

Regarding the problem of general continuity of the Lyapunov exponents (without generic hypothesis),   C. Bocker-Neto and M. Viana~\cite[2016]{BV} proved that this continuity always holds for random  $\GL(2,\R)$-valued linear cocycles. A similar result was announced  a few years ago by A. Avila, A. Eskin and M. Viana to hold for random $\GL(d,\R)$-valued  cocycles (any $d\geq 2$). See Section 10.7 in M. Viana's book~\cite[2014]{LLE}.
In the same direction, E. Malheiro and M. Viana~\cite[2015]{MV2015}   proved the continuity of the Laypunov exponents for random $\GL(2,\R)$-valued  linear cocycles over mixing Markov shifts. Still in the framework of $\GL(2,\R)$-valued linear cocycles, these results where  extended beyond the class of random cocycles by
L. Bakes, A. Brown and C.Butler~\cite[2018]{BBB} who proved a conjecture by M. Viana on the general continuity of the Lyapunov exponents for fiber-bunched cocycles over   hyperbolic base dynamics.

Concerning regularity, the best we can hope for is that the Lyapunov exponent  is an analytic function of a parameter, given a family  of linear cocycles that depends analytically on that parameter. Analyticity of the top Lyapunov exponent is known to hold in two special cases: First by a theorem of D. Ruelle~\cite[1982]{Ru82}, for uniformly hyperbolic
$\SL(2,\R)$-cocycles and more generally for $\GL(d,\R)$-cocycles with  dominated splitting and a one-dimensional strongly unstable direction.
Second by a theorem Y. Peres ~\cite[1991]{Pe91}, for random cocycles generated by  locally constant $\GL(d,\R)$-valued functions with finitely many values, where is shown that any Lyapunov exponent, if simple,  is locally analytic as a function of the probability weights.

  An example of a random $\SL(2,\R)$-cocycle by B. Haperin, see ~\cite[Appendix A]{ST85},
  shows that the H\"older modulus of continuity of Le Page's~\cite[Theorem 1]{LP89} is  optimal. The recent  preprints~\cite[2022]{BeDu22, BCDFK22} deepen  the fact illustrated by this example, showing that if the ratio between the metric entropy of the shift and the Lyapunov exponent is less than $1$ there exists a  dichotomy on the regularity of the Lyapunov exponent, which can either be analytic when the cocycle is uniformly hyperbolic, or else H\"older continuous with a H\"older exponent which can not exceed the said ratio when the cocycle is not uniformly hyperbolic.
  
  It is natural to ask about the modulus of continuity of
  the Lyapunov exponent  as a function of a  random cocycle which does not satisfy the assumptions of Le Page~\cite[Theorem 1]{LP89}. Recently E. H. Tall and M. Viana~\cite[2020]{Tall-Viana} proved that for any random $\GL(2,\R)$-cocycle, the Lyapunov exponents  are always at least pointwisely $\log$-H\"older continuous\footnote{Given a  metric space $(X,d)$, a function $f:X\to\R$  is said to be 
  pointwisely $\log$-H\"older continuous at $a\in X$ if there exists  $C<\infty$ such that
  $| f(x)-f(a)|\leq C\,\left(  \log \frac{1}{d(x,a)} \right)^{-1}$ for all $x$ in some neighborhood of $a$.
}
  and moreover pointwisely H\"older continuous\footnote{A function $f:X\to\R$   is said to be 
  	pointwisely  H\"older continuous at $a\in X$ if there are  $C<\infty$ and $0<\alpha<1$ such that
  	$| f(x)-f(a)|\leq C\, d(x,a)^{\alpha}$ for all $x$ in some neighborhood of $a$.} when the Lyapunov exponents are simple. In basically the same setting,   random $\GL(2,\R)$-cocycles with finitely many values, P. Duarte and S. Klein ~\cite[2020]{DK-Holder} proved that the Lyapunov exponents, if simple, are locally weak-H\"older continuous\footnote{A function $f:X\to\R$   is said to be 
  	locally weak-H\"older continuous at $a\in X$ if there are  $C<\infty$, $c>0$ and $0<\alpha<1$ such that
  	$| f(x)-f(y)|\leq C\,e^{-c\, \left(\log 1/d(x,y)\right) ^{\alpha} }$ for all $x, y$ in some neighborhood of $a$.
  When $\alpha=1$, weak-H\"older becomes H\"older continuity with $c$-exponent.}.
  In ~\cite[2019]{DKS19} P. Duarte, S. Klein and M. Santos
  provided an example of a random $\SL(2,\R)$-cocycle  where the assumptions of~\cite[Theorem 1]{LP89} fail to hold and the regularity of the Lyapunov exponent is neither H\"older nor weak-H\"older. In fact it can not be better than the very bad $\log^3$-H\"older modulus of continuity.

In~\cite[Theorem 3.1]{DK-book}, P. Duarte and S. Klein  
proved that the existence of locally uniform large deviation estimates of exponential type for a linear cocycle (definition in Subsection~\ref{Section: LDT estimates}) is sufficient for the H\"older continuity of the Lyapunov exponent. This result  is used later in the proof of the main theorem. As an application, it was proved in ~\cite[Chapter 5]{DK-book} an analogue of Le Page's H\"older continuity theorem for random $\GL(d,\R)$-cocycles over mixing Markov shifts.

With the single exception of Furstenberg-Kifer~\cite[Theorem B]{FK83}, all results described so far assume that the random cocycles are generated by a distribution law in $\GL(d,\R)$ which is compactly supported. 
Recently in~\cite[2020]{SV20}, A. S\'anchez and M. Viana 
proved that the top Lyapunov exponent is upper semi-continuous, but not continuous, 
with respect to the Wasserstein distance in the class of 
random and non-compactly supported $\SL(2,\R)$-cocycles.

\bigskip

Let us now turn to (fiber-wise) non-invertible linear cocycles. Many aspects of the theory of linear cocycles have been extended to such cocycles, namely in what concerns the Multiplicative ergodic theorem: Blumenthal-Young \cite{BY17},
P. Thieulle \cite{T87}, Backes-Maur\'icio~\cite{BP15}.
In~\cite[2015]{FGTQ2015}, G. Froyland, C. Gonz\'alez-Tukman and A. Quas have proved the stability of the  Lyapunov exponents and Oseledets subspaces
under small perturbations with  uniform noise. Notice however that stability is weaker than continuity, which also allows for non-random perturbations.

There are many obstacles to develop a theory for the continuity of the Lyapunov exponents of random  $\Mat(d,\R)$-cocycles.
First there is no  Furstenberg's theory about the relation between the first Lyapunov exponent and the stationary measures of the random cocycle. Via exterior algebra we can always reduce the study of all Lyapunov exponents to the first one.
The problem starts with the fact that
non-invertible matrices only induce a partial action on the projective space. For instance the action of the zero matrix is nowhere defined. Because of this,  stationary measures may not exist,
which explains why such a general theory is impossible.
Discontinuities as in ~\cite{SV20} can be carried over to this framework. For instance, given a random $\GL(2,\R)$-cocycle with unbounded support, but  co-norm bounded away from $0$, which happens to be a discontinuity point of the first Lyapunov exponent, then the inverse cocycle becomes a random and compactly supported $\Mat(2,\R)$-valued cocycle where the second Lyapunov exponent is discontinuous. Another issue is the existence of \textit{eventually vanishing cocycles}, i.e., cocycles whose iterates eventually vanish on some set with positive measure, which by ergodicity must have first Lyapunov exponent equal to $-\infty$. We have a problem if
the set of eventually vanishing cocycles is dense in some open set.
The problem is even greater if such cocycles can be approximated by other random cocycles whose first Lyapunov exponent is bounded away from $-\infty$. In this case, the top Lyapunov exponent becomes nowhere continuous. Such an example is provided in Section~\ref{example}.

In order to develop a positive theory we need to consider classes of random non-invertible cocycles where eventually vanishing cocycles form a closed nowhere dense set.
 One such possibility, the one pursued in this manuscript, is to consider the class of random  cocycles with  finitely many values of constant rank. The strategy of analysis is a simple reduction to the invertible case.

\bigskip

The paper is organized as follows:
In section 2 we introduce the needed concepts and state the main results.
Section 3 is dedicated to an example of a random $\Mat(2,\R)$-valued family of cocycles where the top Lyapunov exponent is nowhere continuous,
which illustrates the limitations to the development of such sort of theory.
Section 4 explains the core reduction of random constant rank cocycles to invertible random cocycles.
In section 5 we prove the main results.

\section{Statement of results}
\label{statement}

\subsection{General cocycles}

%algebra of matrices
Let  $\Mat(d,\R)$ be the algebra of square $d$-matrices.

%Non-invertible linear cocycles
Consider a measure preserving dynamical system
$(X,\mu,T)$, where $(X,\mu)$ is a probability space and $T:X\to X$ a measurable transformation  with $T_\ast\mu=\mu$. 
A  skew-product map
$F:X\times \R^d\to X\times\R^d$,
$F(x,v):=(T x, A(x)\, v)$, 
where $A:X\to\Mat(d,\R)$ is a measurable function
is called a   \textit{linear cocycle}.
We refer to both $F$ and the pair $(T,A)$ as the linear cocycle. Its iterates   are given by
$F^n(x, v):=(T^n x,  A^n(x)\, v)$, where 
$$ A^n(x):=  A(T^{n-1} x)\, \cdots\, A(T x)\,  A(x).$$

%integrability
A linear cocycle $(T,A)$ is called \textit{integrable} if
$\log \norm{A}\in L^1(X,\mu)$, i.e.,
$$ \int_X |\log \norm{A(x)}|\, d\mu(x) < \infty . $$

%Lyapunov exponent & Furstenberg-Kesten
By Furstenberg-Kesten~\cite{FK60}, if the cocycle is integrable then the following limit exists for $\mu$-almost every $x\in X$ to a $T$-invariant function
$$ L_1(T,A; x):=\lim_{n\to\infty}
\frac{1}{n}\, \log \norm{A^n(x)} .$$
If moreover $(X,\mu,T)$ is ergodic then this limit
function is constant $\mu$-almost surely and its limit value $L_1(T,A)$ is called the \textit{first Lyapunov exponent} of $(T,A)$.
Under the ergodic assumption, $L_1(T,A)$ is still well defined as the same $\mu$-almost sure limit,  taking values $\pm\infty$, when the cocycle is not integrable.

%singular values
Denote by $s_1(A)\geq s_2(A)\geq \cdots \geq s_d(A)\geq 0$ the \textit{singular values} of a matrix $A\in\Mat(d,\R)$. 
%exterior power 
Given $1\leq k\leq d$, the \textit{$k$-th exterior power} of a matrix $A\in\Mat(d,\R)$ is denoted by $\wedge_k A$. This is the matrix  $\wedge_k A:=( a_{I,J})_{I,J}$ indexed over all subsets $I\subset \{1,\ldots, d\}$ with $k$ elements, where $a_{I,J}:=\det(A_{I,J})$ and $A_{I,J}$ denotes the submatrix of $A$ indexed in $I\times J$. Two important properties of this construction are (see~\cite{St83}):
\begin{enumerate}
	\item $\wedge_k(A\,B)=(\wedge_k A)\, (\wedge_k B)$,\; $\forall\, A,B\in\Mat(d,\R)$;
	\item $\norm{\wedge_k A}=s_1(A)\, \cdots\, s_k(A)$,\; $\forall\, A\in\Mat(d,\R)$.
\end{enumerate}

Given a cocycle $(T,A)$,
its $k$-th exterior power cocycle is $(T, \wedge_k A)$
where $\wedge_k A:X\to \Mat( \binom{d}{k},\R)$ is the function  $(\wedge_k A)(x):=\wedge_k A(x)$. From property (1) above it follows that for all $x\in X$ and $n\in\N$,
$$ (\wedge_k A)^n(x) = \wedge_k A^n(x) .$$
Notice that the integrability of $A$ does not imply that of $\wedge_k A$, but we always have
$\log \norm{\wedge_k A}\leq k\, \log \norm{A}$, which implies that $L_1(\wedge_k A)<k\, L_1(A)<\infty$.

%Exterior powers and other Lyapunov exponents
In general, when $(X,\mu,T)$ is ergodic and $1\leq i\leq d$ we define the $i$-th Lyapunov exponent as the $\mu$-almost sure limit
$$ L_i(T, A):=\lim_{n\to\infty}
\frac{1}{n}\, \log s_i(A^n(x)) .$$
The $\mu$-almost sure existence of this limit follows from applying Furstenberg-Kesten's theorem to the exterior power cocycles $\wedge_j A$ with $1\leq j\leq k$. This also implies the relation
$$ L_1(\wedge_k A) = L_1(A) +\cdots + L_k(A) .$$

%%%%%%%%%%%%%%%%%%%%%%%%%%%%%%%%%%%%%%%%%%%%%%%%

\subsection{Random cocycles}
%simplex
Consider the $(m-1)$-dimensional  simplex
$$ \Delta^{m-1}:=\left\{(p_1,\ldots, p_m)\in \R^m \, \colon\,  \sum_{j=1}^m p_j=1,\, \text{ with }\;  p_j\geq 0 \quad  \forall j=1,\ldots, m \, \right\}$$
consisting of all probability vectors with $m$-components.

%Bernoulli neasures and shifts
Given $\underline p= (p_1,\ldots, p_m)\in \mathrm{int}(\Delta^{m-1})$ the product measure
$\mathbf{p}:=(p_1,\ldots, p_m)^\N$ is called a \textit{Bernoulli measure} in the space of sequences
$X:=\{1,\ldots, m\}^\N$. Endowed with this measure the one-sided shift $\sigma:X\to X$,
is called a \textit{Bernoulli shift}. The measure preserving dynamical system $(X,\sigma,\mathbf{p} )$
is ergodic and mixing and is fixed from now on.

%Non-invertible random cocycles
A \textit{non-invertible random cocycle} is a linear cocycle over a Bernoulli shift $(X,\sigma,\mathbf{p})$ determined by a locally constant function $\bld A:X\to\Mat(d,\R)$, where locally constant means that for some list   $(A_1,\ldots, A_m)\in \Mat(d,\R)^m$,
$\bld A(\omega):=A_{\omega_0}$ for all $\omega\in X$.
Hence, a random cocycle is determined by a list $  \underline A= (A_1,\ldots, A_m)$ in the set
$$   \mathscr{C}(d):=  \Mat(d,\R)^m .$$
We denote by $F=F_{\underline A}$ the associated skew-product transformation.
%The measure 
In some formulations of the theory it is also convenient to represent the random cocycle $\underline A$ as the following   finitely  supported probability measure 
$$\mu_{\underline A}:=\sum_{j=1}^m p_j\, \delta_{A_j} \in \Prob_c(\Mat(d,\R)).$$
We refer to all the objects  $\underline A$, $\bld A$, $\mu_{\underline A}$  and $F=F_{\underline A}$ as the random cocycle.
%Space of random cocycles
The space $\mathscr{C}(d)$  is a manifold of dimension $ m\, d^2$ which we refer to
as a \textit{space of random cocycles}.

%Semigroup
Let $\Gamma(\underline A)$ denote  the semigroup generated by the support of $\mu_{\underline A}$, i.e., generated by the matrices $A_1,\ldots, A_m$.
%irreducibility concepts
\begin{definition}
\label{def irreducible}
We say that a random (non invertible) cocycle $\underline A$ is  \textit{reducible} if there exist proper subspaces
$\{0\}\neq V_i\subsetneq \mathrm{Range}(A_i)$ such that 
$A_i\, V_j=V_i$ for all $i,j=1,\ldots, m$.
%$E\subset \R^d$ {\color{blue} and  a positive integer $j>0$  such that $A\,E\subset E$   and $\rank(A\vert_E)\geq j$, for all $A\in\Gamma(\underline A)$}. 
%{\color{red} In this case $F$ induces by restriction a random cocycle
%	$F\vert E:X\times E\to X\times E$ whose first Lyapunov exponent we denote by $L_1(\underline p, \underline A\vert E)$. }
Otherwise  $\underline A$ is called \textit{irreducible}.
\end{definition}

%{\color{red}
%A random cocycle  $\underline A$ is called \textit{quasi-irreducible} if  it is either  not reducible of else if it is reducible but
% $L_1(\underline p, \underline A\vert E)=L_1\underline A$ for every proper $\Gamma(\underline A)$-invariant subspace $E\subset \R^d$. Both} 
The  property of being irreducible  is generic, open and dense, in $\mathcal{C}(d)$.
 %  k-irreducibility
 Given $1\leq k\leq d$, the cocycle
 $\underline A$ is called $k$-irreducible, resp. $k$-quasi-irreducible, if the random cocycle  $\wedge_k \bld A$ is irreducible, resp. quasi-irreducible.
 
 %stationary measures
 For each $A\in\Mat(d,\R)$, let $\mathrm{K}(A)$ be the kernel of $A$ and
 $\hat {\mathrm{K}} (A)\subset \Pp(\R^d)$ be the associated projective subspace.
 A probability measure $\eta\in\Prob(\Pp(\R^d))$ is called $\underline A$-stationary if
 $\eta(\hat{\mathrm{K}} (A_j))=0$, for $j=1,\ldots, m$, and 
 $$ \eta= \sum_{j=1}^m p_j\, (A_j)_\ast\eta .$$
Notice that the existence of stationary measures is not automatic as in the case of invertible random cocycles.

%%%%%%%%%%%%%%%%%%%%%%%%%%%%%%%%%%%%%%%%%%%%%%%%

%rank  of a matrix
The rank of a matrix $A\in\Mat(d,\R)$, denoted by
$\rank(A)$, is the dimension of the linear span of its columns.

%constant rank cocycles
%Space of rank k cocycles
\begin{definition}
	We say that the random cocycle $\underline A$ has constant rank $k$ if
	$\rank(A_j)=k$ for all $j=1,\ldots, m$. We denote by
	$\mathcal{C}_k(d)$ the space random cocycles with
	constant rank $k$.
\end{definition}

%%%%%%%%%%%%%%%%%%%%%%%%%%%%%%%%%%%%%%%%%%%%%%%%

\subsection{Large deviations estimates}
\label{Section: LDT estimates}
We say that a linear cocycle $(T,A)$ satisfies a \textit{large deviations estimate of exponential type} if 
there exist constants $\varepsilon_0>0$, $C<\infty$ and $c>0$ such that for all $0<\varepsilon<\varepsilon_0$ and $n\in\N$,
\begin{equation}
\label{LDT}
\mu\left\{ x\in X\colon \, \left|\frac{1}{n}\, \log \norm{A^n(x)}  -L_1(T,A) \right| > \varepsilon \, \right\} \leq C\, e^{-n\, c\,\varepsilon^2\,  } .
\end{equation} 

When the cocycle $(T,A)$ varies in some space of linear cocycles we say that it satisfies
a \textit{uniform large deviations estimate of exponential type}  if there exists a neighbourhood of the original cocycle and positive constants constants
$\varepsilon_0$, $C, c$ such that 
~\eqref{LDT} holds for all $n\in\N$, $0<\varepsilon<\varepsilon_0$ and every cocycle
in the said neighbourhood.

\subsection{Markov shift}
Let $\Sigma$ be a compact metric space (of symbols) with Borel $\sigma$-algebra $\mathcal{B}$
and $X:=\Sigma^\N$ be the corresponding space of sequences.

 A \textit{Markov kernel} on $\Sigma$  is a map $P\colon \Sigma\times \mathcal{B}\to[0,1]$ such that 

\begin{itemize}
\item [(a)] for any $x\in \Sigma$, the map $E\mapsto P(x,E)$ is a probability measure in $(\Sigma,\mathcal{B})$, 
\item[(b)] for every measurable set $E\in\mathcal{B}$ the function $x\mapsto P(x,E)$ is $\mathcal{B}$-measurable.
\end{itemize}

In particular when $\Sigma$ is finite the kernel $P$ is fully characterized by the row stochastic matrix $P$ with  entries  $P_{i,j}:=P(i, \{j\})$. 

The iterated kernels are defined recursively as 
\begin{enumerate}
\item $P^1=P$, 
\item $P^{n+1}(x,E)=\int P^n(y,E)P(x,dy)$ for every $n\geq1$.
\end{enumerate}

A probability measure $\nu$ on $(\Sigma,\mathcal{B})$ is called $P$-stationary if
$$\mu(E)=\int P(x,E)\mu(dx).$$

In this case we say that   $(P,\mu)$ is \emph{strongly mixing} if there exist constants $C>0$ and $0<\rho<1$ such that for every $f\in L^\infty(\Sigma)$,
$$\label{sm}|\int f(y)P^n(x,dy)-\int f(y)\mu(dy)\|\leq C \rho^n\|f\|_{\infty}$$
is satisfied for $x\in \Sigma$ and $n\in \N$. 
If $\Sigma$ is finite and the stochastic
matrix $P$ is primitive, i.e., there exists $n\geq 1$ such that for all $i,j=1,\ldots, n$, $P_{i,j}^n>0$, then $P$ admits a unique stationary measure $\mu$ such that $(P,\mu)$ is strongly mixing.

Let $X$ be the space of sequences  $X:=\Sigma^\N$ with its product $\sigma$-algebra $\mathcal{P}$.
As before we denote by $\sigma:X\to X$ the one sided shift,
$\sigma(\omega_j)_{j\in\N}:= (\omega_{j+1})_{j\in\N}$.
The pair $(P,\mu)$ determines a unique shift invariant probability measure $\mathbb{P}_{\mu}$  on $X$, called the Kolmogorov extension of $(P,\mu)$,
see~\cite[Definition 5.3]{DK-book}.
When  $\Sigma$ is finite  the Kolmogorov measure of the  cylinder
$$ [k; i_0, \ldots, i_{n-1}]:=\left\{ \omega=(\omega_j)_{j\in\N}\in X  \colon \omega_k=i_0, %\, \omega_{k+1}=i_1
 \,\ldots ,\, \omega_{k+n-1}=i_{n-1} \right\} $$
 is given by
 $$ \mathbb{P}_\mu( [k; i_0, \ldots, i_{n-1}]) =  
 \mu\{i_0\}\, P_{i_0, i_1}\, \cdots\, P_{i_{n-2}, i_{n-1}} . $$
 
 The measure preserving dynamical system $(\sigma, \mathbb{P}_\mu)$
 is called a Markov shift and it is mixing when $(P,\mu)$ is strongly mixing, see~\cite[Proposition 5.1]{DK-book}.

Let 
$\mathscr{C}^\infty_2(X)$ be the space of continuous functions 
$A:X\to \GLmR$ such that $A(\omega)$ depends only on the two
coordinates $\omega_0$ and $\omega_1$.
Such cocycles, over $(\sigma, \mathbb{P}_\mu)$, are determined by 
continuous functions $A:\Sigma\times\Sigma\to\GLmR$.
If $\Sigma$ is finite $A\in \mathscr{C}^\infty_2(X)$ is determined by a finite family of transition matrices $A_{i,j}$ indexed in $i,j\in\Sigma$.

We say that the linear cocycle $(\sigma, A)$, with  $A\in \mathscr{C}^\infty_2(X)$, is irreducible if there exists no measurable function $V:\Sigma\to\Gr(\R^m)$ such that
$A(x,y)\,V(x)=V(y)$ for all $x,y\in\Sigma$, where $\Gr(\R^m)$ denotes the Grassmannian manifold of all linear subspaces of $\R^m$.

\begin{proposition}[Theorem 5.3 of~\cite{DK-book}] \label{18}
Given a Markov kernel $P$ on the compact metric space $\Sigma$ and let $\mu \in\text{Prob}(\Sigma)$ a $P$-stationary measure, if we assume
\begin{enumerate}
	\item $(P, \mu)$ is strongly mixing ($P$ primitive when $\Sigma$ is finite);
	\item $L_1(\sigma,  A)>L_{2}(\sigma,  A)$;
	\item $(\sigma,  A)$ is irreducible.
\end{enumerate}	
Then there exists a neighborhood $V$ of $A$ in $\mathscr{C}^\infty_2(X)$ and there exist $C<\infty, \ k>0$ and $\varepsilon_0>0$ such that for all $0<\varepsilon<\varepsilon_0$, $n\in\N$  and $B\in V$ 

$$\mu\left\lbrace |\frac{1}{n}\log\|B^n\|-L_1(\sigma,B)|>\varepsilon\right\rbrace\leq Ce^{-k\varepsilon^2n}.$$
\end{proposition}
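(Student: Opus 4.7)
My plan is to adapt Le~Page's spectral method to the Markov setting, exactly as carried out in Chapter~5 of~\cite{DK-book}. Lift the dynamics to the compact product space $\Sigma\times\Pp(\R^d)$ and consider, for $z\in\C$ close to $0$, the twisted Markov operators
$$(Q_z\varphi)(x,\hatv):=\int_\Sigma \norm{A(x,y)\,v}^{z}\,\varphi\bigl(y,\widehat{A(x,y)\,v}\bigr)\,P(x,dy)$$
acting on a Banach space $\Hcal_\alpha$ of $\alpha$-H\"older observables on $\Sigma\times\Pp(\R^d)$. One verifies that the moment generating function $\int\norm{A^n(x)\,v}^{z}\,d\mathbb{P}_\mu(x)$ is obtained by iterating $Q_z$ against the constant function $1$, so all information about the law of $\tfrac1n\log\norm{A^n(x)}$ is encoded in the spectrum of $Q_z$.

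The second step is to establish a spectral gap for $Q:=Q_0$ using the three hypotheses. Strong mixing of $(P,\mu)$ delivers a Doeblin--Fortet inequality of the form $\norm{Q^n\varphi}_\alpha\leq \rho^n\,\norm{\varphi}_\alpha + C\,\norm{\varphi}_\infty$, once one produces a projective contraction; the simplicity gap $L_1>L_2$ supplies that contraction on $\Pp(\R^d)$ via a Furstenberg--Guivarc'h-type argument applied to the unique "most expanded direction" of long products; and the irreducibility of $(\sigma, A)$ rules out any alternative stationary probability on $\Sigma\times\Pp(\R^d)$ charging a measurable field of proper subspaces. Combining these ingredients gives that $1$ is a simple eigenvalue of $Q$ on $\Hcal_\alpha$, the rest of the spectrum lies inside a disk of radius $<1$, and there is a unique associated stationary measure $\nu$.

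Once the gap is in place, the LDT follows by the Nagaev--Guivarc'h perturbation argument. Kato's theorem provides an analytic branch $z\mapsto\lambda(z)$ of the leading eigenvalue of $Q_z$ near $z=0$ with $\lambda(0)=1$, $\lambda'(0)=L_1(\sigma,A)$ and $\lambda''(0)$ finite, so $\psi(z):=\log\lambda(z)$ is smooth and strictly convex. One then writes
$$\int \norm{A^n v}^{z}\,d\mathbb{P}_\mu = \lambda(z)^{n}\bigl(1+O(\rho^{n})\bigr),$$
and applies Markov's inequality at an optimized real parameter $z=\pm c\,\varepsilon$, producing the desired bound $C e^{-k\varepsilon^2 n}$ for $0<\varepsilon<\varepsilon_0$; the lower tail uses either $Q_{-z}$ or, since $A$ is $\GL(d,\R)$-valued, the symmetric estimate for the co-norm.

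To promote this to a \emph{uniform} estimate on a neighborhood $V$ of $A$ in $\mathscr{C}^\infty_2(X)$, I would invoke continuity of $B\mapsto Q_z(B)$ in operator norm on $\Hcal_\alpha$ together with Kato's stability of isolated spectra: the simple eigenvalue at $1$ persists, and all constants $C,k,\varepsilon_0,\rho$ can be chosen uniformly over $V$. The main obstacle I expect is not the existence of the spectral gap but its \emph{quantitative uniform} persistence; in particular the Doeblin--Fortet constants must be controlled robustly under small perturbations of both the Markov kernel and the transition matrices $A_{i,j}$, which requires a careful quantitative version of the Furstenberg--Guivarc'h projective contraction that depends only on $L_1-L_2$ and on a lower bound for the singular values of the $A_{i,j}$, and is therefore stable across $V$.
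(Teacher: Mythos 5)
The paper itself offers no proof of Proposition~\ref{18}: it is quoted verbatim as Theorem~5.3 of~\cite{DK-book} and used as a black box in the proof of Theorem~\ref{main theorem}. Your reconstruction --- twisted Markov operator $Q_z$ on H\"older observables over $\Sigma\times\Pp(\R^d)$, Doeblin--Fortet/spectral-gap argument driven by strong mixing, the gap $L_1>L_2$, and irreducibility, followed by the Nagaev--Guivarc'h perturbation scheme plus Kato stability for uniformity over a neighbourhood in $\mathscr{C}^\infty_2(X)$ --- is exactly the route taken in Chapter~5 of~\cite{DK-book}, so it matches the cited source in all essentials.
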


\bigskip

\subsection{Main results}

We now state the main results of this article.

We say that $\underline A=(A_1,\ldots, A_m)\in \Mat(d,\R)^m$ has constant rank $k$ if $\rank(A_j)=k$ for all $1\leq j\leq m$.
Let
$$ \mathcal{C}_k(d):=\left\{\underline A\in \mathscr{C}(d) \, :\, \underline A\, \text{ has constant rank } \, k \,  \right\} . $$ 
\begin{proposition}
\label{prop 1}
If  $\underline A\in\mathcal{C}_k(d)$   then $L_{k+1}\underline A=-\infty$.\\
Moreover,\,   $\rank(A_i\, A_j)=k$ for every $i,j=1,\ldots, m$ \, $\Leftrightarrow$ \, $L_k(\underline A)>-\infty$,\; 
and the set of cocycles $A\in \mathscr{C}_k(d)$ where this property holds is open and dense.
\end{proposition}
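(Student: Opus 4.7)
The strategy is to study the $k$-th exterior power cocycle $\wedge_k \underline A$, which under the constant-rank-$k$ hypothesis consists of rank-$1$ matrices (since $\rank(\wedge_k A_j)=\binom{k}{k}=1$), reducing the Lyapunov analysis to a tractable telescoping product of rank-$1$ factors. The first assertion is immediate from rank submultiplicativity: $\rank(A^n(\omega)) \leq k$ for every $\omega$ and $n \geq 1$, so $s_{k+1}(A^n(\omega)) = 0$ always, and $L_{k+1}(\underline A) = -\infty$ directly from the definition.

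For the equivalence, I write each $\wedge_k A_j = u_j v_j^\top$ with $u_j, v_j \in \R^{\binom{d}{k}}$ nonzero. A direct computation gives $\wedge_k(A_i A_j) = (v_i \cdot u_j)\, u_i v_j^\top$, so $\rank(A_i A_j) = k \Leftrightarrow v_i \cdot u_j \neq 0$; iterating yields the telescoping identity
$$ \wedge_k A^n(\omega) = \Bigl(\prod_{\ell=0}^{n-2} v_{\omega_{\ell+1}} \cdot u_{\omega_\ell}\Bigr)\, u_{\omega_{n-1}} v_{\omega_0}^\top. $$
If $v_i \cdot u_j \neq 0$ for all $i,j$, the finitely many values $|v_i \cdot u_j|$ are bounded below by some $c > 0$, and Birkhoff's ergodic theorem applied to $\omega \mapsto \log|v_{\omega_1} \cdot u_{\omega_0}|$ gives
$$ L_1(\wedge_k \underline A) = \sum_{i,j=1}^m p_i\, p_j\, \log|v_i \cdot u_j| > -\infty. $$
Combined with $L_1(\underline A) \leq \log \max_j \|A_j\| < \infty$ and the identity $L_1(\wedge_k \underline A) = L_1(\underline A) + \cdots + L_k(\underline A)$, this forces every $L_i(\underline A)$ with $i \leq k$ to be finite, and in particular $L_k(\underline A) > -\infty$. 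Conversely, if $v_{i_0} \cdot u_{j_0} = 0$ for some pair, then on the positive-measure cylinder $[0; j_0, i_0]$ one has $\wedge_k A^2(\omega) = 0$, and by multiplicativity $\wedge_k A^n(\omega) = 0$ for all $n \geq 2$ on that cylinder; ergodicity guarantees that almost every $\omega$ satisfies $\sigma^N \omega \in [0; j_0, i_0]$ for some $N$, after which $\wedge_k A^n(\omega) = 0$ for every $n \geq N + 2$, so $L_1(\wedge_k \underline A) = -\infty$ and hence $L_k(\underline A) = -\infty$.

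Openness of the good set inside $\mathcal{C}_k(d)$ is immediate since $\rank(A_i A_j) = k \Leftrightarrow \wedge_k(A_i A_j) \neq 0$ is an open condition. For density, given any $\underline A \in \mathcal{C}_k(d)$, arbitrarily small perturbations $\underline A'$ staying within the smooth submanifold of rank-$k$ matrices in each factor generically satisfy $\mathrm{Range}(A_j') \cap \Ker(A_i') = \{0\}$ for every pair $(i,j)$: for a fixed $\Ker(A_i')$, this is the complement of a proper Schubert-type subvariety of the Grassmannian $\Gr_k(\R^d)$, so it can be achieved simultaneously for all finitely many pairs by a small generic perturbation. The main subtlety I expect is the converse implication in the equivalence, where one must cleanly propagate the local vanishing of $\wedge_k A^2$ on a single cylinder into a.s.\ eventual vanishing of $\wedge_k A^n$ via the shift's recurrence; this is clean but relies essentially on ergodicity of the Bernoulli measure.
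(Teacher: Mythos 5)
Your proof is correct and genuinely takes a different, more explicit route than the paper's. The paper obtains the first assertion via the vanishing of the $(k+1)$-st exterior power (essentially your observation that $s_{k+1}(A^n)\equiv 0$, phrased multiplicatively), and then simply refers the equivalence and the open-and-dense statement to its Propositions~\ref{42} and~\ref{43}, which are organized around the quantity $\Theta_k(\underline A)=\min_{i,j}\|\wedge_k(A_iA_j)\|$. In the positive direction the paper's argument proceeds by propagating a lower bound on $\|\wedge_k\bld A^n v\|$ along the orbit (in steps of two symbols), which requires some care because each $\wedge_k(A_iA_j)$ is a rank-one operator and the stated co-norm inequality only holds for $v$ lying in the relevant one-dimensional ranges. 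Your decomposition $\wedge_k A_j=u_jv_j^\top$ sidesteps that bookkeeping entirely: the telescoping identity
$$\wedge_k\bld A^n(\omega)=\Bigl(\textstyle\prod_{\ell=0}^{n-2}v_{\omega_{\ell+1}}\cdot u_{\omega_\ell}\Bigr)\,u_{\omega_{n-1}}v_{\omega_0}^\top$$
turns $\log\|\wedge_k\bld A^n\|$ into an additive Birkhoff sum up to bounded edge terms, which yields the clean closed formula $L_1(\wedge_k\underline A)=\sum_{i,j}p_ip_j\log|v_i\cdot u_j|$ and makes both directions of the equivalence transparent (finite iff no inner product $v_i\cdot u_j$ vanishes, since then the factors are bounded below; $-\infty$ otherwise because the cocycle dies on a positive-measure cylinder). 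What your approach buys is this explicit formula and a self-contained argument; what the paper's approach buys is that $\Theta_k$ is a quantity on which the continuity and openness statements can be formulated uniformly, which matters for the later sections. On the two remaining points you are aligned with the paper: openness is the same observation that $\wedge_k(A_iA_j)\neq 0$ is an open condition, and for density the paper merely asserts it is clear, whereas your transversality argument (perturbing within the rank-$k$ stratum so that each $\mathrm{Range}(A_j)$ avoids each $\Ker(A_i)$) supplies the missing detail; one small remark is that once you know $\wedge_k\bld A^n\equiv 0$ eventually on a positive-measure set, ergodicity of the Kingman limit already forces $L_1(\wedge_k\underline A)=-\infty$ almost surely, so the explicit recurrence step, while correct, is not strictly needed.
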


\begin{theorem}
\label{main theorem}
Given $\underline A\in\mathscr{C}(d)$ and $1\leq i \leq k \leq d$, assume
\begin{enumerate}
	\item $\underline A$ has constant rank $k$;
	\item $\rank(A_i\, A_j)=k$ for every $i,j=1,\ldots, m$;
	\item $L_i(\underline A)>L_{i+1}(\underline A)$;
	\item $\wedge_i \underline A$ is  irreducible.
\end{enumerate}	
Then
\begin{enumerate}
	\item[(a)] all cocycles $\wedge_i \underline B$ with $\underline B\in\mathcal{C}_k(d)$ in a small neighborhood of
	$\underline A$ satisfy uniform large deviations estimate of exponential type;
	\item[(b)] the function $\mathcal{C}_k(d)\ni \underline B \mapsto L_1( \underline B) + \cdots + L_i( \underline B)$ is H\"older continuous in a small  neighborhood of
	$\underline A$.
\end{enumerate}
\end{theorem}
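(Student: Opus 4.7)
The plan is the reduction indicated in Section~\ref{intro}: compress each rank-$k$ matrix to an invertible $k\times k$ matrix by choosing bases of its range, obtain a locally constant invertible cocycle over the Bernoulli shift viewed as a Markov shift, and then invoke Proposition~\ref{18} together with~\cite[Theorem~3.1]{DK-book}.

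For $\underline B$ in a small neighborhood $\mathcal{U}$ of $\underline A$ in $\mathcal{C}_k(d)$, the ranges $V_j^{\underline B}:=\mathrm{Range}(B_j)\in \Gr(k,\R^d)$ depend continuously on $\underline B$, so one can choose continuously in $\underline B$ linear isomorphisms $E_j^{\underline B}\colon \R^k\to V_j^{\underline B}$ together with left inverses $F_j^{\underline B}\colon \R^d\to \R^k$ satisfying $F_j^{\underline B}E_j^{\underline B}=I_k$ and with $E_j^{\underline B}F_j^{\underline B}$ a projection onto $V_j^{\underline B}$. Hypothesis~(2), being open in $\mathcal{C}_k(d)$, persists on $\mathcal{U}$, so the compressed matrices
$$ \tilde B_{i,j}:=F_j^{\underline B}\, B_j\, E_i^{\underline B}\in\GL(k,\R) $$
are invertible, and $\tilde{\underline B}(\omega):=\tilde B_{\omega_0,\omega_1}$ defines a cocycle in $\mathscr{C}^\infty_2(X)$ over the Bernoulli shift, regarded as a Markov shift with stationary measure $\mu=(p_1,\ldots,p_m)$ and primitive constant kernel $P_{i,j}:=p_j$. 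The assignment $\underline B\mapsto\tilde{\underline B}$ is continuous. Because $\mathrm{Range}(B_{\omega_{k}}\cdots B_{\omega_0})\subseteq V_{\omega_{k}}^{\underline B}$, so that each $E_{\omega_k}^{\underline B} F_{\omega_k}^{\underline B}$ acts as the identity on this image, a telescoping calculation yields
$$ B^{n+1}(\omega)\;=\;\bigl(B_{\omega_n}E_{\omega_{n-1}}^{\underline B}\bigr)\,\tilde B^{n-1}(\omega)\,\bigl(F_{\omega_0}^{\underline B}B_{\omega_0}\bigr), $$
so that $\norm{B^{n+1}(\omega)}$ and $\norm{\tilde B^{n-1}(\omega)}$ agree up to a multiplicative constant bounded uniformly on $\mathcal{U}$, and the same holds after taking $i$-th exterior powers. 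In particular $L_j(\underline B)=L_j(\tilde{\underline B})$ for $1\leq j\leq k$.

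Next I verify the hypotheses of Proposition~\ref{18} for the cocycle $\wedge_i\tilde{\underline A}\in\mathscr{C}^\infty_2(X)$. Strong mixing of $(P,\mu)$ is automatic since $P_{i,j}=p_j>0$ is primitive. Simplicity of the top exponent reads
$L_1(\sigma,\wedge_i\tilde{\underline A})-L_2(\sigma,\wedge_i\tilde{\underline A})=L_i(\tilde{\underline A})-L_{i+1}(\tilde{\underline A})=L_i(\underline A)-L_{i+1}(\underline A)>0$
by hypothesis~(3). Irreducibility of $\wedge_i\tilde{\underline A}$ follows from hypothesis~(4): any invariant family $U_j\subsetneq\R^{\binom{k}{i}}$ for $\wedge_i\tilde{\underline A}$, transported by $\wedge_i E_j^{\underline A}$, yields proper subspaces $W_j:=(\wedge_i E_j^{\underline A})(U_j)\subsetneq \wedge_i V_j^{\underline A}=\mathrm{Range}(\wedge_iA_j)$ satisfying $(\wedge_i A_j)W_l=W_j$, contradicting irreducibility of $\wedge_i\underline A$ in the sense of Definition~\ref{def irreducible}.

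Proposition~\ref{18} then supplies a $\mathscr{C}^\infty_2(X)$-neighborhood of $\wedge_i\tilde{\underline A}$ on which the uniform LDT~\eqref{LDT} holds; pulling this back through the continuous reduction $\underline B\mapsto\wedge_i\tilde{\underline B}$ and transferring via the norm comparison above proves part~(a). For part~(b), applying~\cite[Theorem~3.1]{DK-book} to $\{\wedge_i\underline B\}_{\underline B\in\mathcal{U}}$ converts this uniform LDT into H\"older continuity of the top Lyapunov exponent of $\wedge_i\underline B$, which by the Furstenberg--Kesten relation equals $L_1(\underline B)+\cdots+L_i(\underline B)$. The main technical hurdle is the bookkeeping in the reduction step --- continuously selecting the isomorphisms $E_j^{\underline B}$ as $\underline B$ varies, and faithfully translating the non-invertible irreducibility of Definition~\ref{def irreducible} into the measurable Markov-cocycle irreducibility required by Proposition~\ref{18}; once that is in place, the LDT and H\"older conclusions follow formally.
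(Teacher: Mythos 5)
Your proposal is correct and follows essentially the same route as the paper: compress the rank-$k$ generators to invertible $k\times k$ matrices via frames on the ranges (the paper uses orthonormal frames $M_j$ with $M_j^t$ as left inverse, you allow general $E_j,F_j$), realize the compressed cocycle as a two-step Markov cocycle in $\mathscr{C}^\infty_2(X)$, transfer the reducibility obstruction between the two pictures exactly as in the paper, and then invoke Proposition~\ref{18} for the uniform LDT and \cite[Theorem~3.1]{DK-book} for the H\"older conclusion. The only cosmetic difference is that the paper first reduces to $i=1$ by passing to $\wedge_i\underline A$ and then performs the Markov compression, while you compress first and take $\wedge_i$ afterwards; since the two constructions commute (exterior powers of orthonormal frames are orthonormal frames on $\wedge_i R_j$), the arguments coincide.
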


\begin{corollary}
\label{coro 1}
Given $\underline A\in\mathscr{C}(d)$ and $1\leq i \leq k \leq d$, assume
\begin{enumerate}
	\item $\underline A$ has constant rank $k$;
	\item $\rank(A_i\, A_j)=k$ for every $i,j=1,\ldots, m$;
	\item $L_{i-1}(\underline A)>L_i(\underline A)>L_{i+1}(\underline A)$;
	\item  $\wedge_{i-1} \underline A$  and  $\wedge_{i} \underline A$   are irreducible. 
\end{enumerate}	
Then  the function $\mathcal{C}_k(d)\ni  \underline B \mapsto  L_i( \underline B)$ is H\"older continuous in a small  neighborhood of
	$\underline A$.
\end{corollary}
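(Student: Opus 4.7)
The proof is a clean reduction of Corollary~\ref{coro 1} to Theorem~\ref{main theorem} via the identity
\[
L_i(\underline B) \;=\; \bigl(L_1(\underline B) + \cdots + L_i(\underline B)\bigr) \;-\; \bigl(L_1(\underline B) + \cdots + L_{i-1}(\underline B)\bigr),
\]
valid for every $\underline B \in \mathscr{C}(d)$ at which all these exponents are finite. The plan is to apply Theorem~\ref{main theorem} twice, once with index $i$ and once with index $i-1$, to conclude that both partial sums on the right are H\"older continuous functions of $\underline B$ in a neighborhood of $\underline A$; then $L_i$ is their difference, hence also H\"older continuous.

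First I verify the hypotheses for the index $i$. Assumptions (1), (2), (3), and the $\wedge_i$ part of (4) of Corollary~\ref{coro 1} are exactly assumptions (1)--(4) of Theorem~\ref{main theorem} for that index. Conclusion (b) of the theorem then yields a neighborhood $\mathscr{U}_i$ of $\underline A$ in $\mathcal{C}_k(d)$ on which $\underline B \mapsto L_1(\underline B) + \cdots + L_i(\underline B)$ is H\"older continuous. Next I apply Theorem~\ref{main theorem} with index $i-1$: assumptions (1) and (2) are unchanged; the gap assumption (3) of the theorem becomes $L_{i-1}(\underline A) > L_i(\underline A)$, which is given by (3) of the corollary; and the irreducibility of $\wedge_{i-1}\underline A$ is given by (4) of the corollary. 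Thus there is a neighborhood $\mathscr{U}_{i-1}$ of $\underline A$ on which $\underline B \mapsto L_1(\underline B) + \cdots + L_{i-1}(\underline B)$ is H\"older continuous. On the intersection $\mathscr{U}:=\mathscr{U}_i \cap \mathscr{U}_{i-1}$, which is still a neighborhood of $\underline A$, the function $\underline B \mapsto L_i(\underline B)$ is the difference of two H\"older continuous functions and is therefore H\"older continuous.

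The edge case $i=1$ requires a small comment: the hypothesis $L_{i-1} > L_i$ and the irreducibility of $\wedge_{i-1} \underline A$ are then either vacuous or trivially satisfied (taking $L_0 \equiv 0$ and $\wedge_0 \underline A$ the trivial one-dimensional cocycle), and the statement reduces directly to conclusion (b) of Theorem~\ref{main theorem} applied to the index $1$. There is no real obstacle here; the only point to keep track of is that the H\"older exponents produced by the two applications of Theorem~\ref{main theorem} need not coincide, so the exponent for $L_i$ is taken to be the minimum of the two, and the H\"older constant is the sum of the two constants. Since finiteness of the partial sums on $\mathscr{U}$ is ensured by Proposition~\ref{prop 1} and the openness of condition (2), the identity displayed above is justified throughout $\mathscr{U}$, completing the proof.
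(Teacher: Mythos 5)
Your proposal is correct and follows essentially the same route as the paper: the paper's own proof is the one-line remark ``Apply Theorem~\ref{main theorem} twice, with indexes $i-1$ and $i$ and notice that $L_i(\underline A)=L_1(\wedge_i \underline A) - L_1(\wedge_{i-1} \underline A)$.'' You have simply spelled out the verification of the hypotheses for each of the two applications, the intersection of neighborhoods, the edge case $i=1$, and the bookkeeping of H\"older exponents and constants, all of which are correct and consistent with what the paper leaves implicit.
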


\begin{corollary}
\label{coro 2}
	Given $\underline A\in\mathscr{C}(d)$, assume
	\begin{enumerate}
		\item $\underline A$ has constant rank $k$;
		\item $\rank(A_i\, A_j)=k$ for every $i,j=1,\ldots, m$;
		\item $\underline A$ has simple Lyapunov spectrum;
		\item  $ \wedge_i \underline A$ is  irreducible for all $1\leq i\leq k$. 
	\end{enumerate}	
	Then  the functions $\mathcal{C}_k(d)\ni \underline B\mapsto  L_i(\underline B)$, with $1\leq i\leq k$,  are all  H\"older continuous in a small  neighborhood of
	$\underline A$.
\end{corollary}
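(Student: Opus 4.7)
The plan is to deduce Corollary 2 from $k$ successive applications of the main theorem (Theorem 2.1), together with the elementary identity
\[ L_i(\underline B) \;=\; \bigl(L_1(\underline B)+\cdots+L_i(\underline B)\bigr)\;-\;\bigl(L_1(\underline B)+\cdots+L_{i-1}(\underline B)\bigr), \]
which expresses each individual exponent as a difference of two consecutive partial sums. Thus it suffices to obtain H\"older continuity of every partial sum $S_i(\underline B):=L_1(\underline B)+\cdots+L_i(\underline B)$, $1\leq i\leq k$, on a common neighborhood of $\underline A$.

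First I would verify that the hypothesis $L_i(\underline A) > L_{i+1}(\underline A)$ of Theorem 2.1 holds at $\underline A$ for every index $1\leq i\leq k$. The simplicity of the Lyapunov spectrum (hypothesis (3)) provides the chain $L_1(\underline A) > L_2(\underline A) > \cdots > L_k(\underline A)$, which handles all indices $1\leq i\leq k-1$. For the boundary index $i=k$, Proposition 2.1 gives $L_{k+1}(\underline A) = -\infty$ (from constant rank $k$) and $L_k(\underline A) > -\infty$ (from hypothesis (2)), so the gap persists there as well.

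Next, for each fixed $1\leq i\leq k$ I would apply Theorem 2.1 to $\underline A$ at this index. Conditions (1) and (2) of that theorem are exactly conditions (1) and (2) of the corollary; condition (3) is the Lyapunov gap just verified; and condition (4), the irreducibility of $\wedge_i\underline A$, is supplied by hypothesis (4). Part (b) then yields an open neighborhood $\mathcal{U}_i\subset \mathcal{C}_k(d)$ of $\underline A$ on which $S_i$ is H\"older continuous.

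Finally I would take $\mathcal{U}:=\mathcal{U}_1\cap\cdots\cap\mathcal{U}_k$, which remains an open neighborhood of $\underline A$ in $\mathcal{C}_k(d)$. With the convention $S_0\equiv 0$, every $L_i=S_i-S_{i-1}$ is a difference of two H\"older continuous functions on $\mathcal{U}$, hence is itself H\"older continuous with the smaller of the two exponents. There is no genuine obstacle in this argument; it is a bookkeeping reduction to the main theorem, and the only subtlety is ensuring the boundary gap $L_k(\underline A) > L_{k+1}(\underline A) = -\infty$ via Proposition 2.1, which is precisely what makes hypothesis (2) indispensable.
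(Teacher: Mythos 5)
Your argument is correct and follows essentially the same route as the paper: the paper applies Corollary 2.3 (itself proved by two applications of Theorem 2.1) at each index $1\leq i\leq k$, which is exactly your decomposition $L_i = S_i - S_{i-1}$ of each exponent as a difference of consecutive partial sums, each rendered H\"older continuous by the main theorem. The only cosmetic difference is that you invoke Theorem 2.1 directly rather than passing through the intermediate Corollary 2.3, but the substance --- including the observation that Proposition 2.2 supplies the boundary gap $L_k(\underline A)>-\infty=L_{k+1}(\underline A)$ so that hypothesis (3) of the theorem holds at $i=k$ --- is identical.
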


\begin{corollary}
	\label{coro 3}
	Given $\underline A\in\mathscr{C}_k(d)$, if
	\begin{enumerate}
		\item $\rank(A_i\, A_j)=k$ for every $i,j=1,\ldots, m$;
		\item $\wedge_i \underline A$ is  irreducible for all $1\leq i\leq k$; 
	\end{enumerate}	
	then  the functions $\mathcal{C}_k(d)\ni \underline B \mapsto  L_i(\underline B)$, with $1\leq i\leq k$,  are all  continuous at
	$\underline A$.
\end{corollary}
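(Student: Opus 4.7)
The plan is to reduce Corollary \ref{coro 3} to Theorem \ref{main theorem} at the \emph{gap indices} of the Lyapunov spectrum of $\underline A$ and then use upper semi‑continuity to interpolate across each block of equal exponents.

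First I would pick a small neighborhood $\mathcal{U}$ of $\underline A$ in $\mathcal{C}_k(d)$ so small that the open condition $\rank(B_i B_j)=k$ persists for every $\underline B\in\mathcal{U}$; by Proposition~\ref{prop 1} this forces $L_k(\underline B)>-\infty$, so the partial sums
\[
S_l(\underline B):=L_1(\underline B)+\cdots+L_l(\underline B)=L_1(\wedge_l\underline B)\qquad(1\leq l\leq k)
\]
are all finite on $\mathcal{U}$. Let $0=l_0<l_1<\cdots<l_r=k$ be the indices where the Lyapunov spectrum of $\underline A$ has a jump, so that $L_i(\underline A)$ is constant for $l_{j-1}<i\leq l_j$ and $L_{l_j}(\underline A)>L_{l_j+1}(\underline A)$ for every $j=1,\ldots,r$ (the last inequality at $j=r$ holding trivially because $L_{k+1}(\underline A)=-\infty$). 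At each such gap index $l_j$ all four hypotheses of Theorem~\ref{main theorem} are met (constant rank $k$; $\rank(A_iA_j)=k$; the gap; and the assumed irreducibility of $\wedge_{l_j}\underline A$), so $S_{l_j}$ is H\"older continuous, and in particular continuous, at $\underline A$.

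Next, for an intermediate index $l$ with $l_{j-1}<l<l_j$ I would exploit the monotonicity $L_{l_{j-1}+1}(\underline B)\geq\cdots\geq L_{l_j}(\underline B)$ to sandwich $L_l(\underline B)$: splitting the block as $[l_{j-1}+1,\,l]$ and $[l+1,\,l_j]$,
\[
\frac{S_{l_j}(\underline B)-S_l(\underline B)}{l_j-l}\;\leq\; L_l(\underline B)\;\leq\;\frac{S_l(\underline B)-S_{l_{j-1}}(\underline B)}{l-l_{j-1}}.
\]
Combining the continuity of $S_{l_{j-1}}$ and $S_{l_j}$ at $\underline A$ with the upper semi‑continuity of $S_l$ (which for bounded non‑invertible cocycles follows, as in the invertible case, from writing $L_1(\wedge_l\,\cdot)=\inf_n\frac{1}{n}\int\log\|\wedge_l(\cdot)^n\|\,d\mathbf{p}$, each factor being continuous on $\mathcal{U}$ thanks to the constant‑rank condition), taking $\limsup$ on the right and $\liminf$ on the left gives
\[
\limsup_{\underline B\to\underline A}L_l(\underline B)\leq\frac{S_l(\underline A)-S_{l_{j-1}}(\underline A)}{l-l_{j-1}}=L_l(\underline A),\qquad
\liminf_{\underline B\to\underline A}L_l(\underline B)\geq\frac{S_{l_j}(\underline A)-S_l(\underline A)}{l_j-l}=L_l(\underline A),
\]
where both equalities use that $L_i(\underline A)$ is constant inside the block $(l_{j-1},l_j]$. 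This proves $L_l$ is continuous at $\underline A$, and the continuity at the gap indices $l=l_j$ follows by telescoping from the continuity of $S_{l_j}$ and $S_{l_{j-1}}$.

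The main obstacle I would have to be careful about is the upper semi‑continuity of $S_l=L_1(\wedge_l\,\cdot)$ in the non‑invertible setting: eventually‑vanishing cocycles can in principle push $L_1$ to $-\infty$, which is exactly the pathology illustrated in Section~\ref{example}. Restricting to the open neighborhood $\mathcal{U}\subset\mathcal{C}_k(d)$ on which $\rank(B_iB_j)=k$ rules this out, so the standard subadditive‑infimum argument for USC goes through; everything else is elementary. The H\"older quantification from Theorem~\ref{main theorem} is not needed in the interpolation step, which is why one only obtains continuity, not H\"older continuity, at indices inside a non‑trivial block.
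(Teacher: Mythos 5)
Your proof is correct and follows essentially the same route as the paper. The paper invokes an abstract interpolation lemma (Lemma~\ref{abstract-cont-lemma}, from~\cite{DK2}) applied block by block: continuity of the partial sums $L_1(\wedge_{l}\,\cdot)$ at the gap indices comes from Theorem~\ref{main theorem}, upper semi-continuity of $L_1(\wedge_{l}\,\cdot)$ inside a block is asserted, and the lemma concludes continuity of each $L_i$. Your sandwich inequality is precisely the proof of that abstract lemma written out inline, and your justification of the upper semi-continuity (as an infimum over $n$ of the finite, continuous averages $\frac1n\int\log\norm{\wedge_l(\cdot)^n}\,d\mathbf{p}$, valid on the open set where $\rank(B_iB_j)=k$ persists) fills in a step the paper only states. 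So the two arguments differ only in that you unroll the cited lemma and supply the USC verification explicitly, which is a modest gain in self-containedness, not a different strategy.
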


\begin{remark}
The full characterization of the continuity of 
$\mathscr{C}_k(d)\ni \underline A\mapsto L_i(\underline A)$ in the renaming reducible cases depends on the result announced by Avila, Eskin and Viana, referred to in the introduction, and its generalization to random cocycles over mixing Markov shifts.	
\end{remark}

The next and final proposition gives a simple criterion to check assumption (4) in Theorem~\ref{main theorem} and corollaries~\ref{coro 1} and~\ref{coro 2}

\begin{proposition}
\label{Zariski dense criterion}
Given $\underline A\in\mathscr{C}(d)$, assume that
\begin{enumerate}
	\item $\underline A$ has constant rank $k$;
	\item $\rank(A_i\, A_j)=k$ for every $i,j=1,\ldots, m$.
\end{enumerate}	
For any   $1\leq l \leq m$, if $R_l:=\mathrm{Range}(A_l)$ then
$A_l\, A_{i_1}\, \cdots \, A_{i_n}\vert_{R_i}:R_l\to R_l$
is an isomorphism for every $i_1, \ldots, i_n\in \{1,\ldots, m\}$, $n\in\N$.
Denoting by  $\mathcal{G}_l$ the group generated by these automorphisms of $R_l$,
we have:
\begin{enumerate}
	\item[(a)] The groups $\mathcal{G}_i$ and $\mathcal{G}_j$ are conjugated for all $1\leq i,j\leq m$;
	\item[(b)] If some $\mathcal{G}_l$ is Zariski dense then $\wedge_i  \underline A$ is  irreducible for all $1\leq i\leq k$. 
\end{enumerate}
\end{proposition}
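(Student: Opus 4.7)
The plan is to first settle the preliminary claim, then prove (a) by exhibiting an explicit conjugating isomorphism between $\mathcal{G}_i$ and $\mathcal{G}_j$, and finally deduce (b) from a Zariski density argument through the exterior power morphism.

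For the preliminary claim I would use that $\rank(A_iA_j)=k$ is equivalent to $\ker(A_i)\cap R_j=\{0\}$. Hence each restriction $A_i|_{R_j}:R_j\to R_i$ has rank $k$ with image inside the $k$-dimensional space $R_i$, so it is a linear isomorphism. Composing these isomorphisms right-to-left along the word produces a chain of isomorphisms $R_l\to R_{i_n}\to\cdots\to R_{i_1}\to R_l$, which is precisely $A_lA_{i_1}\cdots A_{i_n}|_{R_l}$.

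For (a) I would take $\phi_{ji}:=A_j|_{R_i}:R_i\to R_j$, an isomorphism by the above, and introduce the ``switchback'' forward loops $\alpha_{ij}:=(A_iA_j)|_{R_i}\in\mathcal{G}_i$ and $\alpha_{ji}:=(A_jA_i)|_{R_j}\in\mathcal{G}_j$. A direct check shows $\alpha_{ji}=\phi_{ji}\phi_{ij}$, whence the key identity $\phi_{ji}^{-1}=\phi_{ij}\,\alpha_{ji}^{-1}$. For any generator $g=(A_iA_{i_1}\cdots A_{i_n})|_{R_i}$ of $\mathcal{G}_i$, the computation
\begin{equation*}
\phi_{ji}\,g\,\phi_{ji}^{-1}
\;=\;\bigl(\phi_{ji}\,g\,\phi_{ij}\bigr)\,\alpha_{ji}^{-1}
\;=\;\bigl(A_jA_iA_{i_1}\cdots A_{i_n}A_i\bigr)\big|_{R_j}\cdot\alpha_{ji}^{-1}
\end{equation*}
displays the conjugate as a product of two elements of $\mathcal{G}_j$ (the first a forward loop at $R_j$, the second the inverse of the forward loop $\alpha_{ji}$). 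Therefore $\phi_{ji}\mathcal{G}_i\phi_{ji}^{-1}\subset\mathcal{G}_j$, and the opposite inclusion follows by swapping the roles of $i$ and $j$.

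For (b) I would argue by contradiction. If $\wedge_i\underline A$ is reducible with proper nonzero subspaces $V_s\subsetneq\wedge_i R_s$ satisfying $(\wedge_i A_r)V_s=V_r$ for all $r,s$, then iterating along any word starting at index $l$ gives $\wedge_i(A_lA_{i_1}\cdots A_{i_n})V_l=V_l$, so $V_l$ is stable under every generator of $\wedge_i\mathcal{G}_l$ and hence under the whole group. Let $T\subset\GL(\wedge_i R_l)$ be the Zariski-closed set of endomorphisms preserving $V_l$; its preimage under the polynomial morphism $\wedge_i:\GL(R_l)\to\GL(\wedge_i R_l)$ is Zariski closed in $\GL(R_l)$ and contains $\mathcal{G}_l$, so by the Zariski density hypothesis it equals $\GL(R_l)$. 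Thus every element of $\wedge_i\GL(R_l)$ preserves $V_l$, which contradicts irreducibility of the $i$-th exterior power representation of $\GL(R_l)$ for $1\le i\le k-1$; for $i=k$ the space $\wedge_kR_l$ is one-dimensional, so no proper nonzero $V_l$ can exist to begin with. The main obstacle is the algebra in (a): naive conjugation of a forward generator by $\phi_{ji}$ a priori lives only in the groupoid isotropy at $R_j$, and the switchback identity $\phi_{ji}^{-1}=\phi_{ij}\,\alpha_{ji}^{-1}$ is the essential trick that exhibits the conjugate as a product of elements of $\mathcal{G}_j$.
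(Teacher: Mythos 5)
Your proof is correct and follows the same overall strategy as the paper's: the rank condition forces $\ker(A_i)\cap R_j=\{0\}$, so each $A_i|_{R_j}:R_j\to R_i$ is an isomorphism and words restrict to automorphisms of $R_l$; conjugation by $\phi_{ji}=A_j|_{R_i}$ carries $\mathcal{G}_i$ into $\mathcal{G}_j$; and in (b) the stabilizer of $W_l$ under $\wedge_i$ is a Zariski-closed subgroup of $\GL(R_l)$ containing $\mathcal{G}_l$, contradicting density. Where you differ is in making the two containments explicit where the paper is terse. For (a), the paper asserts that $A_jM$ and ``$A_j^{-1}$'' lie in $\mathcal{G}_j$ and concludes $A_j\,\mathcal{G}_i\,A_j^{-1}\subseteq\mathcal{G}_j$, but it does not track the subspaces on which these maps are restricted, nor explain why the conjugate of a forward loop at $R_i$ factors as a product of forward loops at $R_j$ and their inverses; your switchback identity $\phi_{ji}^{-1}=\phi_{ij}\,\alpha_{ji}^{-1}$, with $\alpha_{ji}=(A_jA_i)|_{R_j}\in\mathcal{G}_j$, supplies exactly the missing bookkeeping and displays the conjugate as $(A_jA_iA_{i_1}\cdots A_{i_n}A_i)|_{R_j}\cdot\alpha_{ji}^{-1}\in\mathcal{G}_j$. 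For (b), the paper declares the stabilizer to be a \emph{proper} algebraic subgroup without justification; you close this gap by invoking irreducibility of the exterior power representations $\wedge^i$ of $\GL_k$ for $1\le i\le k-1$ (and dimension one for $i=k$), and you correctly route the Zariski argument through the regular morphism $\wedge_i:\GL(R_l)\to\GL(\wedge_iR_l)$. Both additions are correct and strengthen the exposition without changing the underlying mechanism.
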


\section{An example}
\label{example}

In this section we provide a simple example of a family of random non-invertible cocycles with non constant rank where the Lyapunov exponent is nowhere continuous.

Consider the matrices in $\Mat_2(\R)$,
$$ P=\begin{bmatrix}
	1 & 0 \\0 & 0
\end{bmatrix}\quad  \text{ and } \quad
R_\alpha =\begin{bmatrix}
	\cos\alpha  & -\sin\alpha \\ \sin\alpha & \cos\alpha
\end{bmatrix} $$
and the family of linear cocycles $F_\alpha \equiv\left( (\frac{1}{2},\frac{1}{2}),\, (P, R_\alpha) \right)$.

\begin{proposition}
\label{prop 2}
The following hold:
	\begin{enumerate}
		\item If $\alpha\in \pi/2\,\Q$ there exists $n\in\N$ such that
		$P\, R_\alpha^n\, P=0$ and %$\rank(\mu_\alpha)=0$ and
		 $L_1(F_\alpha)=-\infty$.
		\item If $\alpha\notin 2\pi\,\Q$ then $F_\alpha$ admits the following stationary measure
		$$ \nu= \sum_{j=0}^\infty \frac{1}{2^{j+1}}\, \delta_{\widehat{ R_\alpha^j (1,0)}} .$$
		\item $\displaystyle  L_1(F_\alpha)=\sum_{j=0}^\infty \frac{1}{2^{j+1}}\,\log \abs{\cos(j\,\alpha)} $.
		\item 
		The set $\{ \alpha\in\R \colon L_1(F_\alpha)>-\infty\}$ has full Lebesgue measure.
		% it contains all Diophantine numbers.
		\item The function
		$\alpha\mapsto L_1(\mu_\alpha)$ is nowhere continuous.
	\end{enumerate}
\end{proposition}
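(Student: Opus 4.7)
The plan is to treat the five parts in sequence, each exploiting the elementary structure of the rank-one projection $P = e_1 e_1^\top$ and the isometry $R_\alpha$.

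For (1), a direct computation using $R_\alpha^n = R_{n\alpha}$ gives $P R_\alpha^n P = \cos(n\alpha)\, e_1 e_1^\top$, which vanishes precisely when $n\alpha \in \pi/2 + \pi \Z$; for $\alpha$ in the stated set such an $n$ exists. On the positive-measure cylinder $\{\omega_0 = P,\, \omega_1 = \cdots = \omega_n = R_\alpha,\, \omega_{n+1} = P\}$ one has $A^{n+2}(\omega) = P R_\alpha^n P = 0$ and hence $A^k(\omega) = 0$ for every $k \geq n+2$; forward $\sigma$-invariance of the event $\{\exists k\colon A^k(\omega) = 0\}$ combined with ergodicity then gives $L_1(F_\alpha) = -\infty$. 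For (2), the assumption $\alpha \notin 2\pi \Q$ ensures no $\cos(j\alpha)$ vanishes, so the kernel direction $\widehat{e_2}$ is outside $\supp(\nu)$; a direct check then gives $P_*\nu = \delta_{\widehat{e_1}}$ and $(R_\alpha)_*\nu = \sum_{k \geq 1} 2^{-k}\, \delta_{\widehat{R_\alpha^k e_1}}$, and the $1/2,1/2$ convex combination reproduces $\nu$ exactly.

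For (3), apply Furstenberg's formula $L_1(F_\alpha) = \int\int \log(\|A v\|/\|v\|)\, d\mu_\alpha(A)\, d\nu(\hat v)$ — valid here because $\nu$ is stationary and misses $\widehat{\mathrm{K}}(P)$; the rotation is an isometry so only the $P$-term contributes, and $\|P R_\alpha^j e_1\| = |\cos(j\alpha)|$ on unit representatives yields the stated series. For (4), a Borel--Cantelli argument on the Lebesgue measure of $\{\alpha \in [0,2\pi]\colon |\cos(j\alpha)| < j^{-1-\epsilon}\}$ (which is $O(j^{-1-\epsilon})$) gives $|\cos(j\alpha)| \geq j^{-1-\epsilon}$ eventually for a.e.\ $\alpha$, so $|\log|\cos(j\alpha)|| \lesssim \log j$ and the exponentially weighted series in (3) is absolutely convergent almost everywhere.

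Finally for (5): for any $K > 1$, the set $\mathcal{D}_K$ of $\alpha$ whose continued fraction expansion of $\alpha/\pi$ has all partial quotients $\leq K$ is dense in $\R$, and a standard Diophantine argument produces a uniform bound $|\cos(j\alpha)| \geq c_K/j$ on $\mathcal{D}_K$, which fed into (3) gives a uniform lower bound $L_1(F_\alpha) \geq -C_K$ on $\mathcal{D}_K$. Since also the set from (1) is dense in $\R$ with $L_1 = -\infty$, at any $\alpha_0 \in \R$ one of the two types of approaching sequences must contradict continuity: if $L_1(F_{\alpha_0}) = -\infty$, a $\mathcal{D}_K$-sequence approaching $\alpha_0$ keeps $L_1$ bounded below by $-C_K$; if $L_1(F_{\alpha_0}) > -\infty$, a sequence from (1) sends $L_1$ to $-\infty$. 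The main technical step is the uniform Diophantine lower bound used in (5); all other steps are direct computation or classical Borel--Cantelli.
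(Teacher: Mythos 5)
Your treatments of items (1), (2), and (4) follow essentially the paper's computations (for (4) you use a Borel--Cantelli threshold of $j^{-1-\epsilon}$ instead of the paper's $2^{j+1}/j^2$, but the idea is the same). Items (3) and (5) take genuinely different routes, and both contain gaps.

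For (3) you invoke Furstenberg's integral formula where the paper uses a first-return-map / Abramov-type argument. Two problems. First, Furstenberg's formula is not automatic for a fiber-wise non-invertible cocycle --- the paper's introduction explicitly flags the absence of such a theory --- so ``valid here because $\nu$ is stationary and misses $\widehat{\mathrm{K}}(P)$'' is an assertion, not a proof; you would need to argue, say, that the projective Markov chain on the countable state space $\{\widehat{R_\alpha^j e_1}\}_{j\geq 0}$ is ergodic with respect to $\nu$ and that $e_1$ lies a.s.\ in the fast Oseledets subspace, so that the Birkhoff average of $\log(\|Av\|/\|v\|)$ actually computes $L_1$. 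Second, and more concretely, your own method does not yield the stated series: since $\mu_\alpha=\tfrac12\delta_P+\tfrac12\delta_{R_\alpha}$, the $P$-term carries weight $\tfrac12$, so the Furstenberg integral gives $\tfrac12\sum_j 2^{-(j+1)}\log|\cos(j\alpha)|$, not $\sum_j 2^{-(j+1)}\log|\cos(j\alpha)|$. (Worth noting: the paper's own inducing argument seems to have the mirror-image slip --- the mean return time to the cylinder $[P]$ is $1/\mu([P])=2$ by Kac's lemma, not $1$; a small-$\alpha$ expansion $\log|\cos(j\alpha)|\approx -j^2\alpha^2/2$ and $\sum_j j^2 2^{-(j+1)}=3$ versus a direct computation of $L_1(F_\alpha)\approx -3\alpha^2/4$ confirms the $\tfrac12$. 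So your method, correctly executed, agrees with the Abramov formula, but neither agrees with the series as printed.)

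For (5) you correctly see that the paper's one-line ``clear consequence of (1) and (4)'' is incomplete near points where $L_1=-\infty$: density of the full-measure set from (4) does not prevent $L_1$ from tending to $-\infty$ along every approaching sequence. But your fix rests on a false premise: for any fixed $K$, the set $\mathcal{D}_K$ of $\alpha$ with $\alpha/\pi$ having \emph{all} continued-fraction partial quotients $\leq K$ is a closed, nowhere dense Cantor set, not dense --- it already misses every $x$ with $a_1(x)>K$, hence an entire interval near $1/(K+2)$. Relaxing to ``eventually bounded partial quotients'' restores density but destroys uniformity of the constant $c_K$, so the bound $L_1\geq -C_K$ no longer holds on the dense set. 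The gap is not cosmetic: the series in (3) forces $L_1(F_\alpha)\to -\infty$ as $\alpha\to\pi/2$ along \emph{every} sequence (the $j=1$ term alone blows up), so in the $[-\infty,\infty)$-valued sense $L_1$ is actually continuous at $\pi/2$, and no dense set with a uniform lower bound can exist in a neighborhood of it. What (1) and (4) give without further work is discontinuity at every point where $L_1>-\infty$, i.e.\ at a.e.\ $\alpha$; upgrading this to ``nowhere continuous'' requires either a different argument or a weakening of the statement.
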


\begin{proof}
For proving $(1)$ we have that the composition $P R^n_{\alpha}P$ is given by $$ P R^n_{\alpha}P=\begin{bmatrix}
	\cos (n\alpha) & 0 \\ 0 & 0
\end{bmatrix}\quad,$$ 
consequently since there exists a multiple of $\alpha$ that $n\alpha=\pi/2$, $P R^n_{\alpha}P=0$. Then if we consider $x\in X$ inside of the cylinder of length $n+2$ such that $A^{n+2}(x)=P R^n_{\alpha}P$, the Lyapunov exponent is $L_1(F_\alpha,x)=-\infty$ inside a positive measure cylinder, therefore $L_1(F_{\alpha})=-\infty$.

Next we verify $(2)$. Since the kernel $ K(P)$ of $P$ is the line spanned by $(0,1)$ and $R_{\alpha}$ is invertible, we must only verify that $\nu( \hat{K}(P))=0$ and $\nu=(P_*\nu+{R_{\alpha}}_*\nu)/2$.

Clearly since ${\alpha}\not\in 2\pi\Q$, in particular $j\alpha\neq \pi/2 +2k\pi$ for every $j$, then $\widehat{ R_\alpha^j (1,0)}\neq\hat K(P)$ and $\nu (\hat K(P))=0$. 

On the other hand, for proving the invariance we take a continuous function $\varphi\colon\mathbb{P}(\R^2)\to \R$ and set

$$\begin{aligned}
\frac{1}{2}\int_{\mathbb{P}(\R^2)}\varphi(x) d(P+R_{\alpha})_*\nu=&\frac{1}{2}\int \varphi(P (x))+ \varphi(R_{\alpha}(x))d\nu\\
=&\sum_{j=0}^{\infty}\frac{1}{2^{j+2}}\varphi(\widehat{P\circ R^j_{\alpha}(1,0)})+ \varphi(\widehat{R^{j+1}_{\alpha}(1,0)}).\\
\end{aligned}.$$
In particular for every $j$ we have $\widehat{P\circ R^j_{\alpha}(1,0)}=\widehat{(\cos(j\alpha),0)}=\widehat{(1,0)}$, therefore $\varphi(\widehat{P\circ R^j_{\alpha}(1,0)})=\varphi\widehat{(1,0)}$. Observing that $\varphi\widehat{(1,0)}=\varphi(\widehat{R_{\alpha}^0(1,0)})$, we compute

$$\begin{aligned}
\frac{1}{2}\int_{\mathbb{P}(\R^2)}\varphi(x) d(P+R_{\alpha})_*\nu =&\frac{1}{2}\varphi\widehat{(1,0)}+\sum_{j=0}^{\infty}\frac{1}{2^{j+2}}\varphi(\widehat{R^{j+1}_{\alpha}(1,0)})\\
=& \frac{1}{2}\varphi(\widehat{R_{\alpha}^0(1,0)})+\sum_{j=1}^{\infty}\frac{1}{2^{j+1}}\varphi(\widehat{R^{j}_{\alpha}(1,0)})\\
=& \int \varphi(x)d\nu.\\
\end{aligned}$$
This concludes that $\nu$ is in fact an stationary measure.

In $(3)$ we need to see that $\displaystyle  L_1(F_\alpha)=\sum_{j=0}^\infty \frac{1}{2^{j+1}}\,\log \abs{\cos(j\,\alpha)} $. For this we consider the cylinder $C=[P]$ which can be seen as the infinite union of cylinders $C_j=[PR_{\alpha}^jP]$ for $j\geq 0$ with measure $1/2^{j+2}$. Let $g\colon C\to C$ be the first return map to $C$ given by $g(x)=\sigma^{j(x)}(x)$ where $j(x)=\min\{i\in\N\, ,\, \ \sigma^{i}(x)\in C\}$ and define the cocycle $F_C\colon X\to \Mat_2(\R)$ over $g$ by $\textbf{C}(x)=PR^{j(x)}_{\alpha}P$ with $\mu\vert_C$ the normalized product measure. 

Since $\textbf{C}$ is a rank $1$ cocycle, the logarithm of its norm becomes an additive process and 

$$\begin{aligned}L_1(F_C)=&\lim_{k\to \infty} \frac{1}{k}\log \|\textbf{C}^k(x)\| \\
=&\lim_{k\to \infty} \frac{1}{k}\sum_{j=1}^{k-1}\log \|\textbf{C}(g^j x)\|
=&\int \log \|\textbf{C}(y)\| d\mu_C\\
=& \sum_{j=0}^\infty \frac{1}{2^{j+1}}\log \vert\cos(j\alpha)\vert.
\end{aligned}$$

Finally the relation between the Lyapunov exponent $L_1(F_C)$ and $L_1(F_\alpha)$ is given by $L_1(F_C)=L_1(F_C,x)=c(x)L_1(F_\alpha,x)$ where $c$ is invariant by $\sigma$, see Viana \cite[Proposition 4.18 and Exercise 4.8]{LLE}, which means that is constant in a full measure set and in fact
$$c=\lim \frac{1}{k}\sum_{i=0}^{k-1} j(g^i(x))=\int j(x)d\mu_C=\sum_{i=1}^\infty\frac{j}{2^{j+1}}$$
which equals to $1$, concluding that $L_1(F_\alpha)=L_1(F_C)$.

In $(4)$ we need to prove that the set $\{\alpha\in \R\, : \, L_1(F_{\alpha})>-\infty\}$ has full Lebesgue measure, thus, it is enough to see that the measure of
$$ S_j=\left\lbrace \alpha\in\R : \, |\log|\cos(\alpha j)||>\frac{2^{j+1}}{j^2}\right\rbrace$$
converges to zero when $j$ goes to infinity. This is a consequence of the fact that if $\alpha\in S_j$ then $|\alpha j-\frac{\pi}{2}|< e^{- \frac{2^{j+1}}{j^2}}$ and the sum $\sum_{j=1}^\infty |S_j|<\infty$. Therefore, the limit set $\cap_{k=0}^\infty\cup_{j=k}^\infty S_j$ has measure zero, whose complement is the set of $\alpha\in \R$ satisfying $L_1(F_{\alpha}>-\infty$, which completes $(4)$. 

The item $(5)$ is a clear consequence of $(1)$ and  $(4)$. 

\end{proof}

\begin{remark}
The previous example can be extended to higher dimensions. Take an orthogonal projection $P$ onto some subspace 
$E\in\Gr_k(\R^d)$ and a $1$-parameter group of rotations $R_t$ such that $R_0=R_1=I$ and $R_t E= E^\perp$ for some $t\in\R$. Then $F_t\equiv \left( ( \frac{1}{2}, \frac{1}{2}),\, (P,R_t)  \right)$ satisfies similar properties as above.
\end{remark}

\section{Reduction to invertible cocycles}
\label{reduction}

In this section we prove that any cocycle satisfying hypothesis (1) and (2) of Theorem~\ref{main theorem} is semi-conjugated to a
locally constant invertible cocycle  (over a Bernoulli shift) with values in $\GL(k,\R)$. Locally constant here means that the cocycle depends on two consecutive symbols.

\begin{lemma}
	Given $A,B\in\Mat(d,\R)$  with $\rank(A)=\rank(B)=k$, the following are equivalent:
	\begin{enumerate}
		\item $A\,B$ has rank $<k$;
		\item $\wedge_k(A B)=0$.
	\end{enumerate}
\end{lemma}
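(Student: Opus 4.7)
The plan is to derive the lemma from a general fact about arbitrary matrices, namely that for any $M \in \Mat(d,\R)$ one has $\wedge_k M = 0$ if and only if $\rank(M) < k$. Once this is established, the lemma follows by applying it to $M := A\,B$: the hypotheses $\rank(A) = \rank(B) = k$ only serve to frame the statement, since $\rank(A\,B) \le \min(\rank A, \rank B) = k$, so the condition ``$A\,B$ has rank $<k$'' is exactly the negation of ``$A\,B$ has full (among possible) rank $k$''.

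For the general fact, I would appeal directly to the definition of the exterior power recalled in Section~2: the entries $(\wedge_k M)_{I,J} = \det(M_{I,J})$ are precisely the $k\times k$ minors of $M$ as $I, J$ range over all $k$-element subsets of $\{1,\ldots, d\}$. Therefore $\wedge_k M = 0$ is equivalent to the vanishing of every $k \times k$ minor of $M$. The standard minor characterization of rank from linear algebra then states that $\rank(M)$ equals the largest integer $r$ for which $M$ admits some nonzero $r\times r$ minor. Combining these two equivalences gives $\wedge_k M = 0 \Leftrightarrow \rank(M) < k$ at once, in both directions.

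There is essentially no obstacle: the entire content is the minor characterization of rank. If a more cocycle-oriented proof were preferred, one could instead observe, via the multiplicative property $\wedge_k(A\,B) = (\wedge_k A)(\wedge_k B)$ listed in Section~2, that $\wedge_k A$ and $\wedge_k B$ are nonzero rank-one matrices (since $\rank A = \rank B = k$ forces the image of $\wedge_k A$ to lie in the one-dimensional space $\wedge_k(\mathrm{Range}(A))$, and analogously for $B$), so their product vanishes precisely when the image of $\wedge_k B$ lies in the kernel of $\wedge_k A$; unwinding this condition using the Pl\"ucker embedding recovers the same rank dichotomy for $A\,B$. I would present only the first, more elementary, argument.
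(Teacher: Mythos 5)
Your proof is correct. It does take a mildly different route from the paper's: the paper argues via the \emph{action} of $\wedge_k(AB)$ on decomposable $k$-vectors (if $\rank(AB)<k$, then $ABv_1,\ldots,ABv_k$ are linearly dependent for any $v_1,\ldots,v_k$, so $\wedge_k(AB)(v_1\wedge\cdots\wedge v_k)=0$, and since decomposables span, $\wedge_k(AB)=0$; the converse is declared immediate), whereas you argue via the \emph{entries} of $\wedge_k M$ being the $k\times k$ minors of $M$ and invoke the minor characterization of rank. Both are one-line linear-algebra facts of essentially equal difficulty, so the difference is more one of taste than of substance. What your version buys is the explicit observation that the equivalence $\wedge_k M=0 \Leftrightarrow \rank(M)<k$ holds for arbitrary $M$, so the hypotheses $\rank(A)=\rank(B)=k$ play no role in the proof and only set the context (via $\rank(AB)\le k$). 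The paper's version has the small advantage of supplying a concrete forward argument rather than citing a named fact, which makes the ``immediate'' converse a touch less self-contained than your symmetric equivalence; either write-up is acceptable.
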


\begin{proof}
Clearly, if we assume that $\text{rank}AB<k$, then any combination of $k$ vectors $v_1,\ldots, v_k$ generates a sequence linearly dependent $ABv_1, \ldots ABv_k$. Therefore $\wedge_kAB(v_1\wedge\ldots\wedge v_k)=0$. 
The converse is immediate.
\end{proof} 

Given $\underline A\in \mathcal{C}_k(d)$ with constant rank $k$ define 
$$\Theta_k(\underline A):= \min\left\{ \norm{\wedge_k(A_i\, A_j)} \colon 1\leq i,j\leq m \right\} .$$

\begin{proposition}\label{42}
Given $\underline A\in \mathcal{C}_k(d)$, then $ \Theta_k(\underline A)>0$ if and only if  
 $L_k (\underline A)>-\infty$.
\end{proposition}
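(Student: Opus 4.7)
The plan builds on the preceding lemma: since each $A_j$ has rank $k$, the matrix $\wedge_k A_j$ has rank one, so we may write $\wedge_k A_j = u_j v_j^T$ with nonzero vectors $u_j, v_j \in \R^{\binom{d}{k}}$; moreover the lemma reformulates $\Theta_k(\underline A) > 0$ as the condition that $\rank(A_i A_j) = k$ for every pair $i, j$. The core idea is to translate the question into one about products of rank-one matrices via the identity $(\wedge_k A)(\wedge_k B) = \wedge_k(AB)$ and the rank-one multiplication rule $(u_a v_a^T)(u_b v_b^T) = \langle v_a, u_b\rangle\, u_a v_b^T$.

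For the implication $\Theta_k(\underline A) > 0 \Rightarrow L_k(\underline A) > -\infty$, telescoping yields
$$\wedge_k A^n(\omega) \;=\; \Big(\prod_{i=0}^{n-2} \langle v_{\omega_{i+1}}, u_{\omega_i}\rangle\Big)\, u_{\omega_{n-1}} v_{\omega_0}^T,$$
while a single application gives $\norm{\wedge_k(A_s A_t)} = \norm{u_s}\norm{v_t}\, |\langle v_s, u_t\rangle|$. Setting $M := \max_j \norm{u_j} \cdot \max_j \norm{v_j}$, the hypothesis $\norm{\wedge_k(A_s A_t)} \geq \Theta_k(\underline A)$ bounds every inner-product factor uniformly from below by $\Theta_k(\underline A)/M$, producing an exponential lower bound $\norm{\wedge_k A^n(\omega)} \geq c_0\,(\Theta_k(\underline A)/M)^{n-1}$ for some $c_0 > 0$ independent of $\omega$. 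Hence $L_1(\wedge_k \underline A) \geq \log(\Theta_k(\underline A)/M) > -\infty$. Combining this with the identity $L_1(\wedge_k \underline A) = L_1(\underline A) + \cdots + L_k(\underline A)$ and the automatic upper bound $L_i(\underline A) \leq L_1(\underline A) \leq \log \max_j \norm{A_j} < \infty$, the finiteness of the sum forces every summand, and in particular $L_k(\underline A)$, to exceed $-\infty$.

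For the converse I would argue by contrapositive. If $\Theta_k(\underline A) = 0$, the lemma provides indices $i, j$ with $\rank(A_i A_j) < k$. The cylinder $C := \{\omega \in X : \omega_0 = j,\, \omega_1 = i\}$ has positive Bernoulli measure $p_j p_i > 0$, and for every $\omega \in C$ and every $n \geq 2$ the factorization $A^n(\omega) = A_{\omega_{n-1}} \cdots A_{\omega_2} \cdot (A_i A_j)$ gives $\rank(A^n(\omega)) < k$; consequently $s_k(A^n(\omega)) = 0$ and $\tfrac{1}{n} \log s_k(A^n(\omega)) \equiv -\infty$ on $C$. Ergodicity of the Bernoulli shift then promotes this to $L_k(\underline A) = -\infty$ almost surely. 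I do not anticipate a serious obstacle: once the rank-one structure of $\wedge_k A_j$ is exploited, the argument is essentially mechanical; the only minor subtlety is to pass from finiteness of the sum $L_1 + \cdots + L_k$ to finiteness of its smallest term $L_k$, which works precisely because the top exponent $L_1$ is automatically finite under the rank-$k$ hypothesis.
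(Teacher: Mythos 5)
Your proof is correct and follows essentially the same route as the paper's: reduce to the rank-one exterior-power cocycle $\wedge_k\bld A$, telescope to get an exponential lower bound on $\norm{\wedge_k A^n(\omega)}$ when $\Theta_k(\underline A)>0$, and exhibit a positive-measure cylinder on which the rank of $A^n(\omega)$ drops (so $s_k$ vanishes) when $\Theta_k(\underline A)=0$, in both cases finishing via $L_1(\wedge_k\underline A)=L_1(\underline A)+\cdots+L_k(\underline A)$ and the a priori finiteness of $L_1(\underline A)$. Your explicit rank-one factorization $\wedge_k A_j=u_jv_j^T$ is in fact a cleaner way to phrase the telescoping than the paper's, which as written invokes the inequality $\norm{\wedge_k(A_iA_j)v}\geq a\norm{v}$ for all $v\in\wedge_k\R^d$ --- false for a rank-one matrix in dimension $>1$ --- and only becomes meaningful when $v$ ranges over the relevant one-dimensional image, which is exactly the bookkeeping your $u_j,v_j$ vectors make precise.
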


\begin{proof}
Assume first that $\Theta_k(\underline A)=0$, then there exists a positive measure cylinder $C$ such that $\|\wedge_k \bld A^n\|=0$ for every $\underline{A}\in C$. Therefore, if we compute the Lyapunov exponent we get that $L_k(\underline A)=-\infty$.

Conversely, if  $\Theta_k(\underline{A})=a>0$ then $\|\wedge_k  (A_iA_j)v\|\geq a \|v\|$ for every $v\in \wedge_k\R^m$.
We claim that $\|\wedge_k  \bld A^nv\|\geq a^{n/2} \|v\|$ for every $n\in\N$. 
This is clear for $n=2$.
Assuming  this holds for every $m<n$, and  for some appropriate symbols $1\leq i,j\leq m$,
$$\|\wedge_k  \bld A^{n+1}v\|=\|\wedge_k A_jA_i \bld A^{n-2}v\|\geq a \|\wedge_k \bld A^{n-2}v\|\geq a^{n/2} \|v\|.$$

Therefore $L_1(\wedge_k \bld A)\geq a$ for every $\underline{A}\in \mathcal{C}_k(d)$, and since $L_1(\wedge_k \bld A) = L_1(\underline{A}) +\ldots+L_k(\underline{A})$
it follows that $L_k( \underline{A})>-\infty$.
\end{proof}

\begin{proposition}\label{43}
The following set  
$$ \Uscr_k:=\left\{  \underline A \in \mathcal{C}_k(d) \colon  \, \Theta_k(\underline A)>0 \, \right\}   $$
is open and dense in $\mathcal{C}_k(d)$.
\end{proposition}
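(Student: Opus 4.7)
The plan is to prove openness and density separately, with the density part relying on a rank-$k$ factorization and an elementary algebraic genericity argument.

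\textbf{Openness.} For each fixed pair $(i,j)\in\{1,\ldots,m\}^2$ the map $\underline A\mapsto \|\wedge_k(A_i\,A_j)\|$ is continuous on $\mathscr{C}(d)$, so $\Theta_k$ is continuous as the minimum of finitely many continuous functions. Hence $\{\Theta_k>0\}$ is open in $\mathscr{C}(d)$, and $\Uscr_k=\mathcal{C}_k(d)\cap\{\Theta_k>0\}$ is relatively open in $\mathcal{C}_k(d)$.

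\textbf{Density.} Fix $\underline A\in\mathcal{C}_k(d)$ and $\varepsilon>0$. Since $\rank(A_i)=k$, I write each $A_i=U_i\,V_i^T$ with $U_i,V_i\in\Mat(d\times k,\R)$ of full column rank (take the columns of $U_i$ to be a basis of $\mathrm{Range}(A_i)$ and solve for $V_i$). For all sufficiently small perturbations $(\tilde U_i,\tilde V_i)$ of $(U_i,V_i)$ the columns remain linearly independent, and the products $B_i:=\tilde U_i\,\tilde V_i^T$ lie in $\mathcal{C}_k(d)$ with $\|B_i-A_i\|<\varepsilon$. The key identity is
\[ B_i\,B_j = \tilde U_i\,\bigl(\tilde V_i^T\,\tilde U_j\bigr)\,\tilde V_j^T . \]
Since $\tilde U_i$ has full column rank $k$ and $\tilde V_j^T$ has full row rank $k$, one gets $\rank(B_i\,B_j)=\rank(\tilde V_i^T\,\tilde U_j)$, and by the lemma preceding Proposition~\ref{42}, $\wedge_k(B_i\,B_j)\neq 0$ if and only if $\det(\tilde V_i^T\,\tilde U_j)\neq 0$.

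Consider the polynomial
\[ P\bigl((\tilde U_l,\tilde V_l)_{l=1}^m\bigr) := \prod_{i,j=1}^m \det\bigl(\tilde V_i^T\,\tilde U_j\bigr) \]
on the parameter space $\Mat(d\times k,\R)^{2m}$. Taking each $\tilde U_l$ and $\tilde V_l$ to have $I_k$ in its top $k$ rows and zeros below yields $\tilde V_i^T\,\tilde U_j=I_k$ and $P=1$, so $P$ is not identically zero. Its zero locus is therefore a proper real-algebraic subvariety, hence nowhere dense, and arbitrarily small perturbations $(\tilde U_i,\tilde V_i)$ of $(U_i,V_i)$ can be chosen in its complement; the resulting $\underline B=(B_1,\ldots,B_m)$ lies in $\Uscr_k$ and is within $\varepsilon$ of $\underline A$. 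The only point that needs verifying, but is standard, is that small perturbations of the factorization correspond to small perturbations in matrix norm of the cocycle, which follows from smoothness of the map $(U,V)\mapsto UV^T$; the substantive content is the algebraic genericity of $P$, which replaces the analytic ``main obstacle'' by a straightforward non-vanishing check.
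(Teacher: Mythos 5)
Your proof is correct. The openness argument is exactly the paper's: $\Theta_k$ is a minimum of finitely many continuous functions, hence continuous, so $\{\Theta_k>0\}$ is relatively open in $\mathcal{C}_k(d)$. For density, however, the paper simply declares it ``clear,'' while you supply an actual argument, and it is a good one. Because $\mathcal{C}_k(d)$ is itself a (singular) subvariety of $\Mat(d,\R)^m$ rather than an open set, one cannot just wiggle the entries of the $A_i$ freely; your device of parametrizing rank-$k$ matrices by full-column-rank factors $A_i=U_iV_i^T$, reducing $\rank(B_iB_j)=k$ to $\det(\tilde V_i^T\tilde U_j)\neq 0$, and then invoking the nowhere-density of the zero locus of the nonzero polynomial $P=\prod_{i,j}\det(\tilde V_i^T\tilde U_j)$ handles this cleanly. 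The only step worth saying aloud (you do gesture at it) is that the factorization map $(U,V)\mapsto UV^T$ is continuous and the chosen $(\tilde U_i,\tilde V_i)$ remain of full column rank, so the perturbed tuple $\underline B$ genuinely lies in $\mathcal{C}_k(d)$ and is $\varepsilon$-close to $\underline A$. In short: your openness matches the paper; your density argument fills in a step the paper omits, via an elementary real-algebraic genericity argument.
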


\begin{proof}
	Follows from the continuity of the map
	$\mathcal{C}_k(d)\ni \underline A\mapsto \Theta_k(\underline A)$.
	The density of $\Uscr_k$ in  $\mathcal{C}_k(d)$ is clear.
%	{\color{red} 
%Let $(\underline{p},\underline{A})\in\mathcal{C}_k(d)$ and $\Theta_k(\underline{p},\underline{A})=a$, we need to see that for any pair $(\underline{q},\underline{B})$ near $(\underline{p},\underline{A})$, the constant $\Theta_k(\underline{q},\underline{B})$ is greater than zero. 
%
%Since $\Theta_k(\underline{q},\underline{B})$ does not depend on $\underline{q}$, we simply choose $\underline{B}$ such that for a fixed $\varepsilon>0$ satisfying $\varepsilon<a/2$ every $B_j$ is at a $\varepsilon-$distance from $A_j$. 
%
%We compute
%$$\|\wedge_k (B_jB_i)-\wedge_k(A_jA_i)\|\leq\|\wedge_k (B_jB_i)-\wedge_k(B_jA_i)\|+\|\wedge_k (B_jA_i)-\wedge_k(A_jA_i)\|<2\varepsilon,$$
%therefore
%$$a<\|\wedge_k(A_jA_i)\|<\| \wedge_k (B_jB_i)\|+2\varepsilon,$$
%which allow us to conclude that $0<a-2\varepsilon\leq \| \wedge_k (B_jB_i)\|$ and $(\underline{q},\underline{B})\in \Uscr_k$.
%}
\end{proof}

\bigskip

Take $\underline A \in\Uscr_k$, which will be fixed throughout this section.
Given $1\leq i\leq m$, let 
$R_i:=\mathrm{Range}(A_i)\in \Gr_k(\R^d)$.
Consider a family of $d\times k$ rectangular matrices
$\{ M_i\}_{1\leq i\leq m}$ 
such that for  $i=1,\ldots, m$,
\begin{enumerate}
	\item $M_i$ is orthogonal, i.e., $M_i^t M_i=I$;
	\item 	$\mathrm{Range}(M_i)=R_i$.
\end{enumerate}
As a consequence,
\begin{equation}
	\R^d=\mathrm{K}(A_i)\oplus  R_j,\quad \forall \; 1\leq i,j\leq m .
\end{equation}
Define, for $1\leq i,j \leq m$,
\begin{equation}
\label{Cij}
C_{i,j}:= M_i^t \, A_i \, M_j . 
\end{equation}  
\begin{lemma}
	Given $1\leq i,j\leq m$,  
	\begin{enumerate}
		\item $C_{i,j} \in \GL(k,\R)$,
		\item $M_i\, C_{i,j} = A_i \, M_j$.
	\end{enumerate}
	
\end{lemma}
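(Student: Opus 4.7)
The plan is to verify both claims by exploiting the orthogonality $M_i^t M_i = I$ together with the direct-sum decomposition $\R^d = \mathrm{K}(A_i) \oplus R_j$ (equation (4.2)), which is available because $\underline A \in \Uscr_k$ gives $\rank(A_i A_j) = k$ for all $i,j$ via Proposition~\ref{42}.

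For part (2), the key observation is that the $d \times d$ matrix $M_i M_i^t$ is the orthogonal projection onto $R_i$. Indeed, $(M_i M_i^t)^2 = M_i (M_i^t M_i) M_i^t = M_i M_i^t$, and its range is $R_i$. Since $\mathrm{Range}(A_i) = R_i$, I get $M_i M_i^t A_i = A_i$. Multiplying the defining identity $C_{i,j} = M_i^t A_i M_j$ on the left by $M_i$ then yields $M_i C_{i,j} = M_i M_i^t A_i M_j = A_i M_j$, proving (2).

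For part (1), I would argue that $C_{i,j} \colon \R^k \to \R^k$ is a composition of three injective linear maps. First, $M_j \colon \R^k \to R_j$ is an isometric isomorphism onto $R_j$. Second, $A_i$ restricted to $R_j$ is injective because $R_j \cap \mathrm{K}(A_i) = \{0\}$ by the direct-sum decomposition, and it sends $R_j$ into $R_i$. Third, $M_i^t$ restricted to $R_i$ is an isometric isomorphism onto $\R^k$, since $M_i^t M_i = I$ means $M_i^t$ is a left inverse of $M_i$ on its range $R_i$. Composing these three injections shows $C_{i,j}$ has full rank $k$, hence $C_{i,j} \in \GL(k,\R)$.

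There is no serious obstacle here; the argument is a direct unravelling of the definitions. The only subtle point is justifying the invertibility of $A_i|_{R_j}$, which is where the constant-rank hypothesis $\underline A \in \Uscr_k$ enters essentially (without the condition $\rank(A_i A_j) = k$ one could have $R_j \cap \mathrm{K}(A_i) \neq \{0\}$ and $C_{i,j}$ could fail to be invertible). Everything else is bookkeeping with the orthogonality relation $M_i^t M_i = I$.
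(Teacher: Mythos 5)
Your proof is correct and follows essentially the same approach as the paper: both rely on the orthogonality $M_i^t M_i = I$ (so $M_i M_i^t$ is the orthogonal projection onto $R_i$) to prove (2), and on the decomposition $\R^d = \mathrm{K}(A_i) \oplus R_j$ coming from $\underline A \in \Uscr_k$ to get invertibility in (1). If anything, your write-up is a bit more careful on the one delicate point: the paper's proof states that $A_i M_j$ has rank $k$ ``since $A_i$ has rank $k$ and $M_j$ is injective,'' which by itself is insufficient (one also needs $\mathrm{Range}(M_j) \cap \mathrm{K}(A_i) = \{0\}$, exactly the fact you make explicit via $R_j \cap \mathrm{K}(A_i) = \{0\}$); the paper then deduces $\rank(C_{i,j}) = k$ from $\rank(M_i C_{i,j}) = \rank(A_i M_j) = k$, while you instead exhibit $C_{i,j}$ directly as a composition of three injections. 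Both routes are fine and essentially equivalent; no substantive difference.
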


\begin{proof}
	Since $A_i$ has rank $k$ and $M_j$ is injective, $A_i M_j$ has rank $k$.
	Because $M_i^t M_i=I$, the matrix  $M_i M_i^t$ represents the orthogonal projection onto $R_i$. Hence
	$M_i\, C_{i,j}=(M_i M_i^t) A_i M_j=A_i M_j$ has rank $k$, which implies that
	$C_{i,j}$ has also rank $k$, i.e., $C_{i,j}\in \GL(k,\R)$.
%{\color{red} 	Item $(2)$ is immediate once $M_i^t M_i=I$. }
\end{proof}

%
%\begin{proof}
%	Since $B$ has rank $k$ and $M_A$ is injective, $B M_A$ has rank $k$.
%	Because $M_B^t M_B=I$, the matrix  $M_B M_B^t$ represents the orthogonal projection onto $R(B)$. Hence
%	$M_B\, C_{B,A}=(M_B M_B^t) B M_A=B M_A$ has rank $k$, which implies that
%	$C_{B,A}$ has also rank $k$, i.e., $C_{B,A}\in \GL_k(\R)$.
%\end{proof}

Consider the invertible random cocycle
$\tilde F:X\times\R^k\to X\times \R^k$ defined by
$\tilde F(\omega, v):=(\sigma\omega, \bld C(\omega)\, v)$, where $\bld C(\omega):=C_{\omega_1 \, \omega_0}$.
Although the base map is the same Bernoulli shift, because the matrix valued function $\bld C$ depends now on two consecutive coordinates we will regard it as  a linear cocycle over a Markov shift.

Consider also the embedding 
$H:X\times\R^k\to X\times \R^d$ defined by
$H(\omega, v):=(\sigma\omega, M_{\omega_0} v)$.
%{\color{red} , whose range is the sub-bundle
%$$ \mathscr{R}:=\left\{ (\omega, v)\in X\times\R^d\colon v\in %R_{\omega_{-1}} \right\} .$$
%ISSUE: $\mathscr{R}:=H(X\times\R^k)$ is not a sub-bundle.
%}

\begin{proposition}\label{45}
	Given $(\underline p, \underline A)\in \Uscr_k$,
	%{\color{red} the sub-bundle $\mathscr{R}$ is $F=F_{(\underline p, \underline A)}$-invariant
	%and } 
	the random Markov cocycle $\tilde F$ is semi-conjugated to $F$ in the sense that 
$$\begin{CD}
	X\times\R^k  @>\tilde F >> X\times\R^k\\
	@VHVV @VVHV\\
	X\times\R^m  @>>F>  X\times\R^m
\end{CD}    $$
is a commutative diagram.	Moreover $L_i(F)=L_i(\tilde F)$
for $i=1,\ldots, k$.
\end{proposition}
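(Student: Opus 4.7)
My plan is to break the proof into three stages, following the structure of the statement.

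\textbf{First}, the commutativity of the square follows immediately from the preceding lemma. For $(\omega,v) \in X \times \mathbb{R}^k$ one computes
\[ F \circ H(\omega,v) = (\sigma^2\omega,\, A_{\omega_1} M_{\omega_0} v) \quad\text{and}\quad H \circ \tilde F(\omega,v) = (\sigma^2\omega,\, M_{\omega_1} C_{\omega_1,\omega_0} v), \]
and these expressions agree because item (2) of the previous lemma asserts $A_i M_j = M_i C_{i,j}$, applied with $i = \omega_1$ and $j = \omega_0$.

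\textbf{Second}, and this I expect to be the conceptual core of the argument, I would upgrade the commutativity to a clean factorization of $\bld A^n$. Iterating $F \circ H = H \circ \tilde F$ gives the cocycle identity
\[ \bld A^n(\sigma\omega)\, M_{\omega_0} = M_{\omega_n}\, \bld C^n(\omega), \qquad n\geq 1. \]
Combining this (with $n$ replaced by $n-1$) with the decomposition $\bld A^n(\omega) = \bld A^{n-1}(\sigma\omega)\, A_{\omega_0}$ and with the identity $A_{\omega_0} = M_{\omega_0}\, M_{\omega_0}^t\, A_{\omega_0}$ (valid because $A_{\omega_0}$ has range in $R_{\omega_0}$ and $M_{\omega_0} M_{\omega_0}^t$ is the orthogonal projection onto $R_{\omega_0}$) yields
\[ \bld A^n(\omega) \;=\; M_{\omega_{n-1}}\, \bld C^{n-1}(\omega)\, D_{\omega_0}, \qquad D_{\omega_0}\;:=\;M_{\omega_0}^t\, A_{\omega_0}, \]
where $D_{\omega_0}$ is a $k\times d$ rank-$k$ matrix depending only on $\omega_0$, hence taking only $m$ distinct values. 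Isolating the invertible ``interior'' $\bld C^{n-1}(\omega)$ from the rectangular boundary pieces $M_{\omega_{n-1}}$ and $D_{\omega_0}$ is the main step; it converts a non-invertible cocycle problem into an invertible one up to uniformly bounded boundary terms.

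\textbf{Third}, the equality $L_i(F) = L_i(\tilde F)$ for $1\leq i\leq k$ follows by tracking singular values through this factorization. Since $M_{\omega_{n-1}}^t M_{\omega_{n-1}} = I_k$, left multiplication by $M_{\omega_{n-1}}$ preserves all singular values, so $s_i(\bld A^n(\omega)) = s_i(\bld C^{n-1}(\omega)\, D_{\omega_0})$. A standard sandwich estimate (Courant--Fischer applied to $\bld C^{n-1}(\omega)\, D_{\omega_0}\, D_{\omega_0}^t\, \bld C^{n-1}(\omega)^t$, using $s_k(D_{\omega_0})^2 I_k \preceq D_{\omega_0} D_{\omega_0}^t \preceq s_1(D_{\omega_0})^2 I_k$) then bounds this between $s_k(D_{\omega_0})\, s_i(\bld C^{n-1}(\omega))$ and $s_1(D_{\omega_0})\, s_i(\bld C^{n-1}(\omega))$. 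Because $D_{\omega_0}$ takes only $m$ values, the quantities $\log s_1(D_{\omega_0})$ and $\log s_k(D_{\omega_0})$ are uniformly bounded, so after dividing by $n$ and letting $n\to\infty$ they vanish, giving $L_i(F) = L_i(\tilde F)$ for $i = 1,\ldots, k$.
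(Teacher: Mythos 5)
Your proof is correct, and it takes a genuinely different (and more elementary/quantitative) route than the paper's. Both proofs begin with the same commutativity computation and the same iterated semi-conjugation identity $M_{\omega_n}\,\bld C^n(\omega)=\bld A^n(\sigma\omega)\,M_{\omega_0}$, but diverge afterward. The paper invokes the Oseledets theorem for both $F$ and $\tilde F$ and argues geometrically: $H$ is fiber-wise an isometric embedding of $\R^k$ onto $R_{\omega_0}$, the restriction $\bld A^n(\sigma\omega)\vert_{R_{\omega_0}}$ is conjugated to $\bld C^n(\omega)$, vectors in $\Ker(A_{\omega_1})$ have directional exponent $-\infty$, and since $\R^d=\Ker(A_{\omega_1})\oplus R_{\omega_0}$ the first $k$ exponents agree. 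You instead extract the explicit factorization $\bld A^n(\omega)=M_{\omega_{n-1}}\,\bld C^{n-1}(\omega)\,D_{\omega_0}$ with $D_{\omega_0}=M_{\omega_0}^t A_{\omega_0}$ (a rank-$k$ matrix taking only $m$ values), note that the isometric left factor $M_{\omega_{n-1}}$ leaves singular values unchanged, and sandwich each $s_i$ for $i\le k$ between $s_k(D_{\omega_0})\,s_i(\bld C^{n-1}(\omega))$ and $s_1(D_{\omega_0})\,s_i(\bld C^{n-1}(\omega))$; after $\tfrac1n\log$ and $n\to\infty$ the boundary terms vanish. What this buys: you only need the $\mu$-a.s.\ existence of the limits $L_i$ (Furstenberg--Kesten applied to exterior powers, already set up in Section~2), not the finer Oseledets filtration for a fiber-non-invertible cocycle, which the paper has to pause to justify. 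Moreover your sandwich is strictly stronger than the norm-ratio bound $c^{-1}\le \norm{\bld A^n(\sigma\omega)}/\norm{\bld C^n(\omega)}\le c$ which the paper re-derives separately from the same identity in the proof of Theorem~\ref{main theorem}(a) in order to transfer large deviation estimates; your version is its full singular-value analogue and would make that later step immediate. The trade-off is that the paper's argument is more geometric and exhibits the structure of the Oseledets filtration explicitly.
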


\begin{proof}
%{\color{red}	First we prove that the set $\mathscr{R}$ is $F$-invariant. Clearly if $(\omega,v)\in \mathscr{R}$, then $F(\omega,v)=(\sigma \omega,A_{\omega_0} v)\in \mathscr{R}$ since $R_{(\sigma\omega)_{-1}}=R_0$.   Conversely if $(\omega,v)\in \mathscr{R}$, there exists $(\tilde \omega,\tilde v)$ such that $(\omega,v)=(\sigma\tilde \omega, A_{\omega_{-1}}\tilde v)=F(\tilde\omega,\tilde v)$ and since $v\neq0$, $\tilde v \in (K(A_{\omega_{-1}}))^c=R_{\omega_{-2}}=R_{\tilde\omega_{-1}}$. This affirmation allow us to conclude that $(\tilde\omega,\tilde v)\in \mathscr{R}$ which implies that the set $\mathscr{R}$ is $F-$invariant. }
	
The relation $F \circ H=H\circ\tilde{F}$ holds because
$A_{\omega_1}M_{\omega_0}=M_{\omega_1}C_{\omega_1\omega_0}$.
In fact 
\begin{align*}
	(F\circ H)(\omega,v)=F(\sigma\omega,M_{\omega_0}v)&=(\sigma^2\omega, A_{\omega_1}M_{\omega_0})\\
	&=(\sigma^2\omega,M_{\omega_1}C_{\omega_1\omega_0})=(H\circ\tilde F)(\omega,v) .
\end{align*}
This also implies that
\begin{equation}
	\label{semi-conjugation C-A}
	M_{\omega_n}\, \bld C^n(\omega) = \bld A^n(\sigma\omega)\, M_{\omega_0}, \quad \forall \omega\in X.
\end{equation}
	
	Oseledets theorem holds for fiber non-invertible cocycles
	such as $F$. See for instance~\cite[Theorem 44]{DK-book},  where the assumption that the base dynamics is invertible is not an issue here because we can replace the one-sided shift $\sigma$ by its natural extension, the two-sided shift.
	
	On the other hand the Oseledets theorem also holds for the 
	 fiber  invertible cocycle  $\tilde F$ and its integrability follows trivially from the fact that its action on the fibers is given by finitely many invertible matrices $C_{i,j}$. Thus all Lyapunov exponents of $\tilde F$ are finite.

	The semi-conjugation $H$ is not injective
	but it is still fiber-wise injective with
	$H_\omega\R^k=M_{\omega_0}\R^k=R_{\omega_0}$ being a $k$-dimensional subspace. Moreover,
	$\bld A^n(\sigma\omega)\, R_{\omega_0} = R_{\omega_n}$,
	and by equation ~\eqref{semi-conjugation C-A}, the action of the linear map
	$\bld A^n(\sigma\omega)\vert_{ R_{\omega_0} }: R_{\omega_0}\to  R_{\omega_n}$ is conjugated to that of 	
	$\bld C^n(\omega)$ on $\R^k$. Hence with probability one,
	any Lyapunov exponent of $\bld A^n(\sigma\omega)$ along a vector in $R_{\omega_0}$ equals one of the (finite)  Lyapunov exponents of 
	$\bld C^n(\omega)$.  On the other hand, the Lyapunov exponent
	of $\bld A^n(\sigma\omega)$ along any vector in $\Ker(A_{\omega_1})$ is $-\infty$. Hence this subspace is part of the Oseledets' s filtration of the cocycle $F$ at the base point $\sigma\omega$. Finally, because    $\R^d= \Ker(A_{\omega_1})\oplus R_{\omega_0}$ it follows that the cocycles $F$ and $\tilde F$ have the same first $k$ Lyapunov exponents, all of them being finite.
\end{proof}

\section{Proof of the main results}
\label{proof}

\begin{proof}[Proof of Proposition~\ref{prop 1}]
For the first part take any combination of $k+1$ vectors $v_1,\ldots,v_{k+1}$, since $\rank(A_j)=k$,
 $$\wedge_{k+1} A_j (\omega)\, v_1\wedge\ldots \wedge v_{k+1}=0.$$ In particular $\wedge_{k+1} \bld A^n(x) \, v_1\wedge\ldots \wedge v_{k+1}=0$ for every $n\in\N$, which implies that $\|\wedge_{k+1} \bld A^n (\omega)\|=0$ and $L_1(\bld A) + \cdots + L_{k+1}(\bld A) = L_1( \wedge_{k+1}\bld A)=-\infty$. 	
Hence $L_{k+1}(  \underline  A)= L_{k+1}( \bld A)=-\infty$.

The second statement follows from Proposition~\ref{42}.

Finally, the third statement stems from Proposition~\ref{43}.
\end{proof}

%%%%%%%%%%%%%%%%%%%%%%%%%%%%%%%%%%%%%%%%%%%%%%%%%

\begin{proof}[Proof of Theorem~\ref{main theorem}]
Taking exterior powers we reduce the proof to the 
case $i=1$. 

To prove item (a) we  use Proposition \ref{45} to semi-conjugate our original cocycle to a random cocycle over a Markov shift and then  apply Proposition~\ref{18}. 

Notice that the matrices $M_i$ introduced in the previous section can be selected   depending continuously on $\underline A$ in a neighborhood of this cocycle. 
Hence the map $\underline A\mapsto C_{i,j}$, defined by~\eqref{Cij}
is also continuous in that same neighborhood. In other words, we can make the Markovian cocycle $\bld C$ to depend continuously on $(\underline p, \underline A)\in \mathcal{C}_k(d)$.

Next we need to check that the cocycle $(\sigma, \bld C)$ satisfies the assumptions (1)-(3) of Proposition~\ref{18}:

Item (1) holds because $\sigma:X\to X$ is the same Bernoulli shift.
The row stochastic matrix $P$ has all rows equal to $\underline p$
and is hence a primitive matrix.

Item (2) is just a reformulation of hypothesis (2) in Theorem~\ref{main theorem}.

To prove item (3) we need to see that $(\sigma, \bld C)$ is irreducible. Assuming by contradiction that  $(\sigma, \bld C)$ is reducible there exists a family of proper subspaces
$W_i\in \Gr(\R^k)$, indexed in $1\leq i\leq m$, such that
$W_i = C_{i,j}\, W_j$ for all $1\leq i,j\leq m$.
Consider the family of sub-spaces $V_i:=  M_i\, W_i\in\Gr(\R^m)$.
Then $$V_i= M_i\, W_i =M_i\, C_{i,j}\, W_j= A_{i}\, M_j\, W_j= A_i\, V_j$$
which shows that $(\underline p, \underline A)$ is reducible according to Definition~\ref{def irreducible}. Hence by hypothesis (4) of Theorem~\ref{main theorem}, the cocycle  $(\sigma, \bld C)$ must be irreducible.

From~\eqref{semi-conjugation C-A} it follows that there exists a positive and finite constant $c$ such that
\begin{equation}
\label{bounds on norm ratios}
 c^{-1}\leq \frac{ \norm{\bld A^n(\sigma\omega)}}{\norm{\bld C^n(\omega)}} \leq c \qquad \forall \omega\in X. 
\end{equation}

In fact multiplying~\eqref{semi-conjugation C-A} on the left by $M_{\omega_n}^t$, since $M_{\omega_n}^t\, M_{\omega_n} =I$ we get that
$\norm{\bld C^n(\omega)}\lesssim \norm{\bld A^n(\sigma\omega)}$.
Conversely,  multiplying~\eqref{semi-conjugation C-A} on the right by $M_{\omega_0}^t$, since  $M_{\omega_0}\, M_{\omega_0}^t$  is the orthogonal projection on $R_{\omega_0}$, we get that
\begin{equation*}
\norm{\bld A^n(\sigma \omega)\vert_{R_{\omega_0}}}\lesssim \norm{\bld A^n(\sigma \omega)\, M_{\omega_0}\, M_{\omega_0}^t}\lesssim \norm{\bld C^n( \omega)}.
\end{equation*} 
Because $\Theta_k(\underline A)>0$, \, $\Ker(A_{\omega_1})\oplus R_{\omega_0}=\R^d$ and the restriction of the orthogonal projection 
from $R_{\omega_0}$ onto $\Ker(A_{\omega_1})^\perp$ is an isomorphism 
with positive co-norm $\kappa>0$. Therefore
$$\norm{\bld A^n(\sigma \omega)} = \norm{\bld A^n(\sigma \omega)\vert_{\Ker(A_{\omega_1})^\perp }} \leq \kappa^{-1}\, \norm{\bld A^n(\sigma \omega)\vert_{R_{\omega_0}}} \lesssim \norm{\bld C^n( \omega)}. $$
 
The bounds~\eqref{bounds on norm ratios} imply that
$$ \frac{1}{n}\, \log \norm{\bld A^n(\sigma\omega)} 
= \frac{1}{n}\, \log \norm{\bld C^n(\omega)} + O\left(\frac{1}{n} \right) $$
which allows us to transfer large deviation estimates from the cocycle $\bld C$ to $\bld A$. Finally the conclusion (a) follows from Proposition \ref{18} and the continuity of the
map $\underline A\mapsto \bld C$.

Conclusion (b) follows from the Abstract Continuity Theorem~[Theorem 3.1]\cite{DK-book}. The main assumption of this theorem holds  because we have (fiber) uniform LDT (Large Deviation Type) estimates in a neighborhood $\mathscr{V}$ of   $ \underline A \in \mathcal{C}_k(d)$. The exponential type of these LDT estimates leads to a H\"older modulus of continuity of the function $\mathscr{V}\ni  \underline A \mapsto L_1(\underline A)$.
\end{proof}

%%%%%%%%%%%%%%%%%%%%%%%%%%%%%%%%%%%%%%%%%%%%%%%%%

\begin{proof}[Proof of Corollary~\ref{coro 1}]
	
Apply Theorem~\ref{main theorem} twice, with indexes $i-1$ and $i$
and notice that $L_i(\underline A)=L_1(\wedge_i  \underline A) - L_1(\wedge_{i-1}  \underline A)$.	
\end{proof}

%%%%%%%%%%%%%%%%%%%%%%%%%%%%%%%%%%%%%%%%%%%%%%%%%

\begin{proof}[Proof of Corollary~\ref{coro 2}]
Apply Corollary~\ref{coro 1} to all indexes $1\leq i\leq k$.
\end{proof}

%%%%%%%%%%%%%%%%%%%%%%%%%%%%%%%%%%%%%%%%%%%%%%%%%

\begin{proof}[Proof of Corollary~\ref{coro 3}]
It is enough to prove that if 
\begin{equation}
\label{gap pattern}
L_{i-1}(\underline A)
< L_{i}(\underline A)=L_{i+k}( \underline A)<L_{i+k+1}(\underline A)
\end{equation}
then the maps $ \underline A\mapsto f_j(\underline A):= L_{i+j-1}( \underline A)$ are continuous at 
$\underline A$, for every $j\in \{1, \ldots, k+1\}$, 
which is a consequence of the following result, see~\cite[Lemma 6.1]{DK2}:

\begin{lemma}\label{abstract-cont-lemma}
	Given a topological space $\tops$ and $a \in \tops$   let $f_1, f_2, \ldots, f_p \colon \tops \to \R$ be functions such that:
	
	\begin{enumerate}
		\item[(a)]   $f_1(x) \ge f_2(x) \ge \ldots \ge f_p(x)$ for all  $x \in \tops$;
		
		\item[(b)]   $f_1 (a) = f_2 (a) = \ldots = f_p (a)$;
		
		\item[(c)]   $g := f_1 + f_2 + \ldots + f_p$ is continuous at $a$; 
		
		\item[(d)]   $f_1$ is upper semi-continuous at $a$.
	\end{enumerate}

	Then  for every $1 \le j \le p$, $f_j$ is continuous at $a$.
	
\end{lemma}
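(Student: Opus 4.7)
My plan is to show continuity of each $f_j$ at $a$ by separately establishing upper semi-continuity and lower semi-continuity, and exploiting the sum condition (c) to get the lower bound.

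First I would observe that conditions (a) and (b) force all $f_j(a)$ to share a common value $c := f_1(a)$. Because $f_j(x) \le f_1(x)$ for every $x$ by (a), and $f_1$ is upper semi-continuous at $a$ by (d), we obtain
\[
\limsup_{x \to a} f_j(x) \;\le\; \limsup_{x \to a} f_1(x) \;\le\; f_1(a) \;=\; c \;=\; f_j(a),
\]
so every $f_j$ is automatically upper semi-continuous at $a$.

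The heart of the argument is the lower bound. Fix $j$ and rewrite
\[
f_j(x) \;=\; g(x) \;-\; \sum_{i \ne j} f_i(x).
\]
By (c), $g$ is continuous at $a$ with $g(a) = p\,c$, so $g(x) \to pc$. For each $i \ne j$ we already know $\limsup_{x \to a} f_i(x) \le c$, and since $\limsup$ is subadditive over finitely many terms, $\limsup_{x\to a} \sum_{i \ne j} f_i(x) \le (p-1)c$. Combining these facts gives
\[
\liminf_{x \to a} f_j(x) \;=\; \lim_{x \to a} g(x) \;-\; \limsup_{x \to a} \sum_{i \ne j} f_i(x) \;\ge\; pc - (p-1)c \;=\; c \;=\; f_j(a).
\]
Together with the upper semi-continuity above, this yields $\lim_{x \to a} f_j(x) = f_j(a)$.

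There is no real obstacle here; the only subtlety is the sign flip that turns $\liminf$ into $\limsup$ when subtracting the remaining $f_i$'s, which is what makes the upper semi-continuity of the \emph{other} $f_i$'s (not just $f_j$ itself) enter decisively. One should also note that the proof uses nothing about the topological space beyond the existence of a net / filter notion of limit at $a$, so it applies in the generality stated.
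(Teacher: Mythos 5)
The paper does not contain its own proof of this lemma: it is quoted from \cite[Lemma~6.1]{DK2} and used as a black box inside the proof of Corollary~\ref{coro 3}. Your argument is correct and is the natural one — upper semi-continuity of every $f_j$ from (a), (b), (d), and lower semi-continuity from $f_j = g - \sum_{i\ne j} f_i$ together with continuity of $g$ and the already-established upper semi-continuity of the remaining $f_i$'s. One small stylistic point: the middle step
\[
\liminf_{x\to a} f_j(x) \;=\; \lim_{x\to a} g(x) \;-\; \limsup_{x\to a}\sum_{i\ne j} f_i(x)
\]
is true here because $g$ actually converges at $a$, but the inequality $\liminf_{x\to a} f_j(x) \ge \lim_{x\to a} g(x) - \limsup_{x\to a}\sum_{i\ne j} f_i(x)$ is what you actually use and is immediate from superadditivity of $\liminf$; writing $\ge$ avoids having to justify the equality.
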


 By Corollary~\ref{coro 1}, the function
$$\underline B\mapsto L_i(\underline B)+\cdots + L_{i+k}(\underline B)= L_1(\wedge_{i+k} \underline B)-L_1(\wedge_{i-1} \underline B)$$
is H\"older continuous in a neighborhood $\mathscr{V}$ of $\underline A$. In particular hypothesis (c) above holds. Assumptions (a) and (b) of Lemma~\ref{abstract-cont-lemma}  hold because of the gap pattern~\eqref{gap pattern}.

By Theorem~\ref{main theorem},
the function $\underline B \mapsto L_1(\wedge_{i-1}(\underline B))=L_1(\underline B)+\cdots + L_{i-1}(\underline B)$
is H\"older continuous and in particular continuous at $\underline A$.
On the other hand, the function $\underline B \mapsto L_1(\wedge_{i}(\underline B))=L_1(\underline B)+\cdots + L_{i}(\underline B)$
is always upper semi-continuous at $\underline A$.
Hence the difference
$\underline B \mapsto   f_1(\underline B)=L_{i}(\underline B) = L_1(\wedge_{i}(\underline B)) - L_1(\wedge_{i-1}(\underline B)) $
 upper semi-continuous at $\underline A$, which proves item (d) of Lemma~\ref{abstract-cont-lemma}.
\end{proof}

%%%%%%%%%%%%%%%%%%%%%%%%%%%%%%%%%%%%%%%%%%%%%%%%%

\begin{proof}[Proof of Proposition~\ref{Zariski dense criterion}]
 Given $l, i_1, \ldots, i_n\in \{1,\ldots, m\}$,
 by the proof of Proposition~\ref{42},
 $\norm{\wedge_k (A_l\, A_{i_1}\, \cdots \, A_{i_n}) }>0$ which implies that $R_l$ is the range of the product matrix  $A_l\, A_{i_1}\, \cdots \, A_{i_n}$. Notice that this matrix shares its kernel with $A_{i_n}$. On the other hand by assumption (2),
 $R_l\oplus \Ker(A_{i_n})=\R^d$, which implies that
 $A_l\, A_{i_1}\, \cdots \, A_{i_n}\vert_{R_l}:R_l\to R_l$ is an isomorphism. Thus the group $\mathscr{G}_l$ generated by these automorphisms of $R_l$ is well defined.

 Given $i,j=1,\ldots, m$,  and for any  $M$ in the semigroup generated by $\{A_1,\ldots, A_m\}$, $A_j\, M$ and $A_j^{-1}$ belong to $\mathscr{G}_j$, which implies that  
 $A_j\, \mathscr{G}_i\, A_j^{-1}\subseteq \mathscr{G}_j$.
 Therefore these groups are pairwise conjugated and conclusion (a) holds.

 Assume $\mathscr{G}_l$ is Zariski dense but $\wedge_i\underline A$ is reducible according to Definition~\ref{def irreducible}.
 Then there exist proper subspaces $\{0\}\neq W_j\subset \mathrm{Range}(\wedge_i A_j)\subset \wedge_i \R^d$, such that
 $(\wedge_i A_j)\, W_n= W_j$ for all $n,j\in \{1,\ldots, m\}$.
 Therefore
 $$ \left\{ g\in \mathrm{GL}(R_l) \colon (\wedge_i g)\, W_l = W_l \, \right\}$$ 
 is a proper algebraic subgroup of $\mathrm{GL}(R_l)$   containing 
 $\mathscr{G}_l$. This contradicts the fact that $\mathscr{G}_l$ 
 is Zariski dense and concludes the proof.
\end{proof}

\medskip

\subsection*{Acknowledgments}
P.D. and C.F. were  supported by FCT-Funda\c{c}\~{a}o para a Ci\^{e}ncia e a Tecnologia through the project  PTDC/MAT-PUR/29126/2017.

C.F.  was also  supported by Conselho Nacional de Desenvolvimento Cient\'ifico e Tecnol\'ogico (CNPq). 

P.D.   was also supported by CMAFCIO through FCT project  UIDB/04561/2020.

\bigskip

%\nocite{*}
\bibliographystyle{amsplain}
\bibliography{bib}

\end{document}